\newtheorem{lemma}{Lemma}[section]
\newtheorem{prop}[lemma]{Proposition}
\theoremstyle{definition}
\newtheorem{defn}[lemma]{Definition}
\newtheorem{remark}[lemma]{Remark}
\newcommand{\wOmega}{\widetilde\Omega}
\begin{document}

\title{Isochronous Dynamics in Pulse Coupled Oscillator Networks with Delay}

\author{Pan~Li}
\email{P.Li@rug.nl}
\affiliation{School of Mathematical Sciences, Centre for Computational Systems Biology of ISTBI, Fudan University, Shanghai 200433, China}
\affiliation{Johann Bernoulli Institute for Mathematics and Computer Science,
University of Groningen, P.O. Box 407, 9700 AK, Groningen, The Netherlands}

\author{Wei~Lin}
\email{wlin@fudan.edu.cn}
\affiliation{School of Mathematical Sciences, Centre for Computational Systems Biology of ISTBI, Fudan University, Shanghai 200433, China}
\affiliation{Shanghai Key Laboratory of Contemporary Applied Mathematics, and LMNS, Ministry of Education, China}

\author{Konstantinos~Efstathiou}
\email{K.Efstathiou@rug.nl}
\affiliation{Johann Bernoulli Institute for Mathematics and Computer Science,
University of Groningen, P.O. Box 407, 9700 AK, Groningen, The Netherlands}

\date{\today}

\begin{abstract}
  We consider a network of identical pulse-coupled oscillators with delay and all-to-all coupling. We demonstrate that the discontinuous nature of the dynamics induces the appearance of \emph{isochronous regions}---subsets of the phase space filled with periodic orbits having the same period. For fixed values of the network parameters each such isochronous region corresponds to a subset of initial states on an appropriate surface of section with non-zero dimension such that all periodic orbits in this set have qualitatively similar dynamical behaviour. We analytically and numerically study in detail such an isochronous region, give a proof of its existence, and describe its properties. We further describe other isochronous regions that appear in the system.
\end{abstract}

\keywords{pulse coupled oscillator networks, isochronous dynamics, synchronization}

\maketitle

\begin{quotation}
  Pulse coupled oscillator networks are a key model for the study of synchronization in a wide variety of systems, ranging from fireflies to wireless communication systems. Moreover, despite their simplicity, they manifest dynamical behavior that does not typically appear in smooth finite-dimensional dynamical systems. We report on the existence of isochronous dynamics in pulse coupled oscillator networks with delay: for suitable values of the parameters there exist open sets of initial conditions giving periodic orbits with the same period. This, previously unknown, behavior of pulse coupled oscillator networks with delay provides a deeper understanding of their dynamics and how they can reach synchronization.  
\end{quotation}

\section{Introduction}

\paragraph{Pulse coupled oscillator networks} Pulse coupled oscillator networks (PCONs) have been used to model interactions in networks where each node affects other nodes in a discontinuous way. Two such examples are the synchronization related to the function of the heart \citep{Peskin1975} and the synchronization of fireflies \citep{Mirollo1990}. There is now an extensive literature on the dynamics of pulse coupled oscillator networks focusing on synchronization and the stability of synchronized states.

Concerning syncronization, after the seminal work \citet{Mirollo1990} who considered excitatory coupling with no delay, \citet{Ernst1995, Ernst1998} showed the importance of delayed and inhibitory coupling for complete synchronization, while excitatory coupling leads to synchronization with a phase lag. In particular, for inhibitory coupling it was shown that the network syncronizes in multistable clusters of common phase. \citet{Wu2007, Wu2009} showed that all-to-all networks with delayed excitatory coupling do not synchronize, either completely or in a weak sense, for sufficiently small delay and coupling strength. In \citep{Wu2010a} it was shown that the parameter space in systems with excitatory coupling is separated into two regions that support different types of dynamics. The effect of network connectivity to synchronization is numerically studied in \citep{LaMar2010} where it is shown that the proportion of initial conditions that lead to synchronization is an increasing function of the node-degree. \citet{Kielblock2011} showed that pulses induce the breakdown of order preservation, and demonstrated a system of 2 identical and symmetrically coupled oscillators where the winding numbers of the two oscillators can be different. \citet{Klinglmayr2012a} showed that under self-adjustment assumptions, systems with heterogeneous phases rates and random individual delays would converge to a close-to-synchrony state. Moreover, synchronization has been considered in systems with stochastic features. \citet{OKeeffe2015} studied how small clusters of synchronized oscillators in all-to-all networks coalesce to form larger clusters and obtained exact results for the time-dependent distribution of cluster sizes.

Except for synchronized states more interesting dynamics also manifests in pulse coupled oscillator networks. The existence of unstable attractors has been established, numerically and analytically, in all-to-all pulse coupled oscillator networks with delay, see \citep{Ashwin2005, Broer2008a, Timme2002a, Timme2003}. Unstable attractors are fixed points or periodic orbits, which are locally unstable, but have a basin of attraction which is an open subset of the state space. Heteroclinic connections between saddle periodic orbits, such as unstable attractors, have been shown to exist in pulse coupled oscillator networks with delay \citep{Ashwin2004, Ashwin2005b, Broer2008} and they have been proposed as representations of solutions of computational tasks. \citet{Schittler-Neves2012} showed that complex networks of dynamically connected saddle states are capable of computing arbitrary logic operations by entering into switching sequences in a controlled way. \citet{Timme2008} gave an analysis of asymptotic stability for topologically strongly connected PCONs, while \citet{Zeitler2009} analyzed the influence of asymmetric coupling and showed that it leads to a smaller bistability range of synchronized states. \citet{Zumdieck2004} numerically showed the existence of long chaotic transients in pulse-coupled oscillator networks. The length of the transients depends on the network connectivity and such transients become prevalent for large networks.

\paragraph{Isochronous dynamics}

\begin{figure}[htbp]
  \centering
  \includegraphics[width=0.8\linewidth]{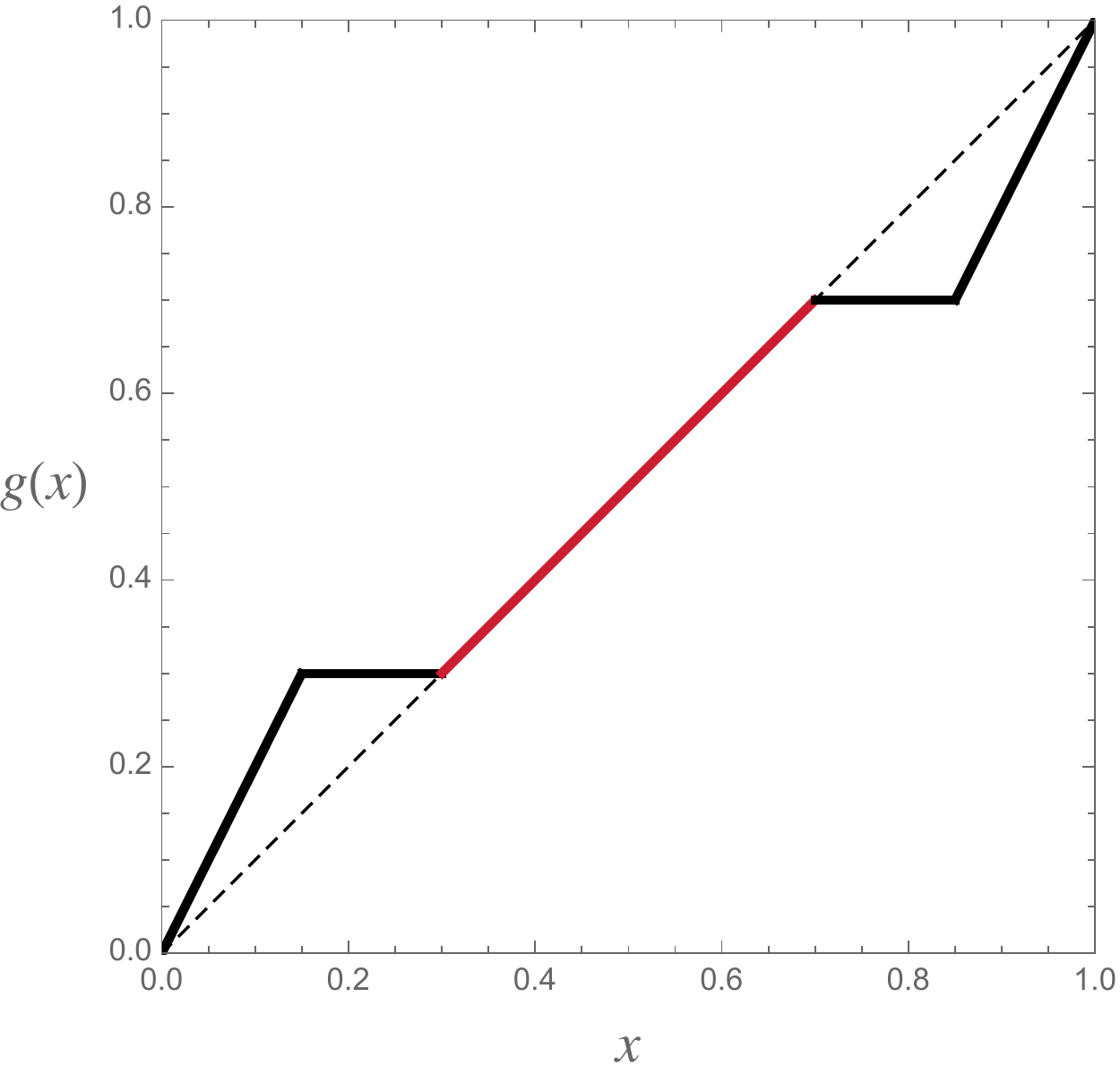}
  \caption{Isochronous dynamics in a non-smooth 1D map.}
  \label{fig/1d-isochronous}
\end{figure}

In this paper we report on a newly observed dynamical behavior of PCONs with delay. Specifically, we show that for appropriate values of the coupling parameters, that is, of the coupling strength $\varepsilon$ and the delay $\tau$, there is a $n\ge1$-dimensional subset of state space foliated by periodic orbits having the same period. We call the subsets of state space \emph{isochronous regions}. These periodic orbits are equivalent in a sense we make precise in Definition~\ref{defn/pulse-eqv}. Furthermore, the parameter region for which such periodic orbits manifest is an open subset of the parameter space.

This type of observed dynamics in PCON with delay is a special case of \emph{isochronous dynamics}. One talks of isochronous dynamics when a dynamical system has an open set of initial states that give rise to periodic solutions having the same period. Examples include the one-dimensional harmonic oscillator, any $N$-dimensional harmonic oscillator where the frequencies, $\omega_1,\dots,\omega_N$, satisfy $N-1$ resonance relations, and the restriction of the Kepler problem to any constant energy surface. We refer to \citep{Calogero2011} for an extensive review of recent results pertaining to isochronous dynamics in the context of ordinary differential equations and Hamiltonian systems. Nevertheless, such isochronous dynamics have not been previously observed in PCONs, except of course for the trivial case of identical uncoupled oscillators.

A non-trivial example of isochronous dynamics induced by a non-smooth map $g: [0,1] \to [0,1]$ is depicted in \autoref{fig/1d-isochronous}. Each point in the middle (red) segment of the graph of $g$, lying along the diagonal, is a fixed point of $g$ and thus such points give isochronous dynamics of period $1$. 

\paragraph{Structure of the paper} In \autoref{sec/pcon} we describe the dynamics of PCONs with delay and we review its basic properties. In \autoref{sec/per-plat} we first present numerical experiments that show the appearance of $n\ge1$-dimensional sets of periodic orbits on a surface of section for specific values of the dynamical parameters. Then we define the notion of a \emph{isochronous region}. In \autoref{sec/ir4} we discuss in detail one of the isochronous regions in the system. We prove its existence for an open subset of parameter values, describe in detail the dynamics in the region, and determine the stability of the periodic orbits that constitute the region. In \autoref{sec/other} we briefly describe other isochronous regions that appear in the system. We conclude the paper in \autoref{sec/conclusions}.

\section{Dynamics of PCONs with delay}
\label{sec/pcon}

In this section we specify the dynamics of the PCONs with delay that we consider in this paper. 

\subsection{Mirollo-Strogatz model with delay}

We consider a variation of the Mirollo-Strogatz model \citep{Mirollo1990, Ernst1995, Ernst1998}. The system here is a homogeneous all-to-all network consisting of $N$ pulse coupled oscillators with delayed excitatory interaction. All the oscillators follow the same integrate-and-fire dynamics. Between receiving pulses the state of each oscillator evolves autonomously and its dynamics is smooth. When the $i$-th oscillator reaches the threshold value $x_i = 1$ its state is reset to $x_i=0$. At the same moment the $i$-th oscillator sends a pulse to all other oscillators, $j \ne i$, in the network. The time between the moment an oscillator sends a pulse and the moment the other oscillators receive that pulse is the \emph{delay} $\tau \ge 0$. When the $i$-th oscillator receives $m$ simultaneous pulses without crossing the threshold value, its state variable jumps to $x_i' = x_i + m \hat{\varepsilon}$. If $x_i + m \hat{\varepsilon} \ge 1$, that is, if the oscillator crosses the firing threshold by receiving these pulses, then the new state becomes $x_i' = 0 \equiv 1$. The dynamics for each oscillator is thus given by
\begin{subequations}
  \begin{align}
    \dot{x}_i(t) & = F(x_i(t)), \\
    x_i(t^+) & = 0,\; \text{if $x_i(t)=1$}, \\
    \intertext{and}
    x_i (t) & = \min(1, x_i (t^-) + m\hat{\varepsilon} ),
  \end{align}
  if $m$ other oscillators fired at time $t-\tau$.
\end{subequations}
Here, $\hat{\varepsilon} = \varepsilon / (N-1)$, where $\varepsilon \ge 0$ is the \emph{coupling strength}, and $F$ is a positive, decreasing, function $(F > 0,\, F' < 0)$.

To simplify the description of the dynamics we define, following \cite{Mirollo1990}, the phases $(\theta_i)_{i=1}^N$ instead of the state variables $(x_i)_{i=1}^N$.
The two sets of variables are related through
\begin{align*}
  x_i = f(\theta_i),
\end{align*}
where $f:[0,1]\to[0,1]$ is a diffeomorphism fixing the endpoints, that is, $f(0)=0$ and $f(1)=1$. The map $f$ is defined through the requirement that the uncoupled dynamics of each oscillator is given by $\dot{\theta}_i = 1$. This implies
\begin{align*}
  \dot x = F(x) = f'(f^{-1}(x)) = \frac{1}{(f^{-1})'(x)},
\end{align*}
and that $f$ is increasing and concave down $(f' > 0,\, f'' < 0)$.
Following \cite{Mirollo1990} we choose
\begin{align*}
  F(x) := F_b(x) = \frac{e^b - 1}{b} e^{-bx}, \quad b > 0,
\end{align*}
giving
\begin{align*}
  f(\theta) := f_b(\theta) = \frac{1}{b} \ln \left( 1 + (e^b-1)\, \theta \right).
\end{align*}

Then, in terms of the phases $\theta_i$, the dynamics is given by
\begin{subequations}
  \begin{align}
    \dot{\theta}_i(t) & = 1, \\
    \theta_i(t^+) & = 0,\; \text{if $\theta_i(t)=1$,} \\
    \intertext{and}
    \theta_i(t) & = \min\{ 1, H(\theta_i(t^-),m\hat{\varepsilon}) \},\;
  \end{align}
  if $m$ other oscillators fired at time $t-\tau$.
\end{subequations}
The function $H$ is defined by
\begin{align}\label{eq/H-func}
  H (\theta, \delta)
  = f^{-1}(f(\theta)+\delta)
  = e^{b \delta} \, \theta + \frac{e^{b \delta}-1}{e^b-1},
\end{align}
and it gives the new phase of an oscillator with phase $\theta$ after it receives a pulse of size $\delta$, ignoring the effect of the threshold.

\begin{figure}[tbp]
  \subfloat[\label{fig/f-plot}]%
  {\includegraphics[width=0.47\linewidth]{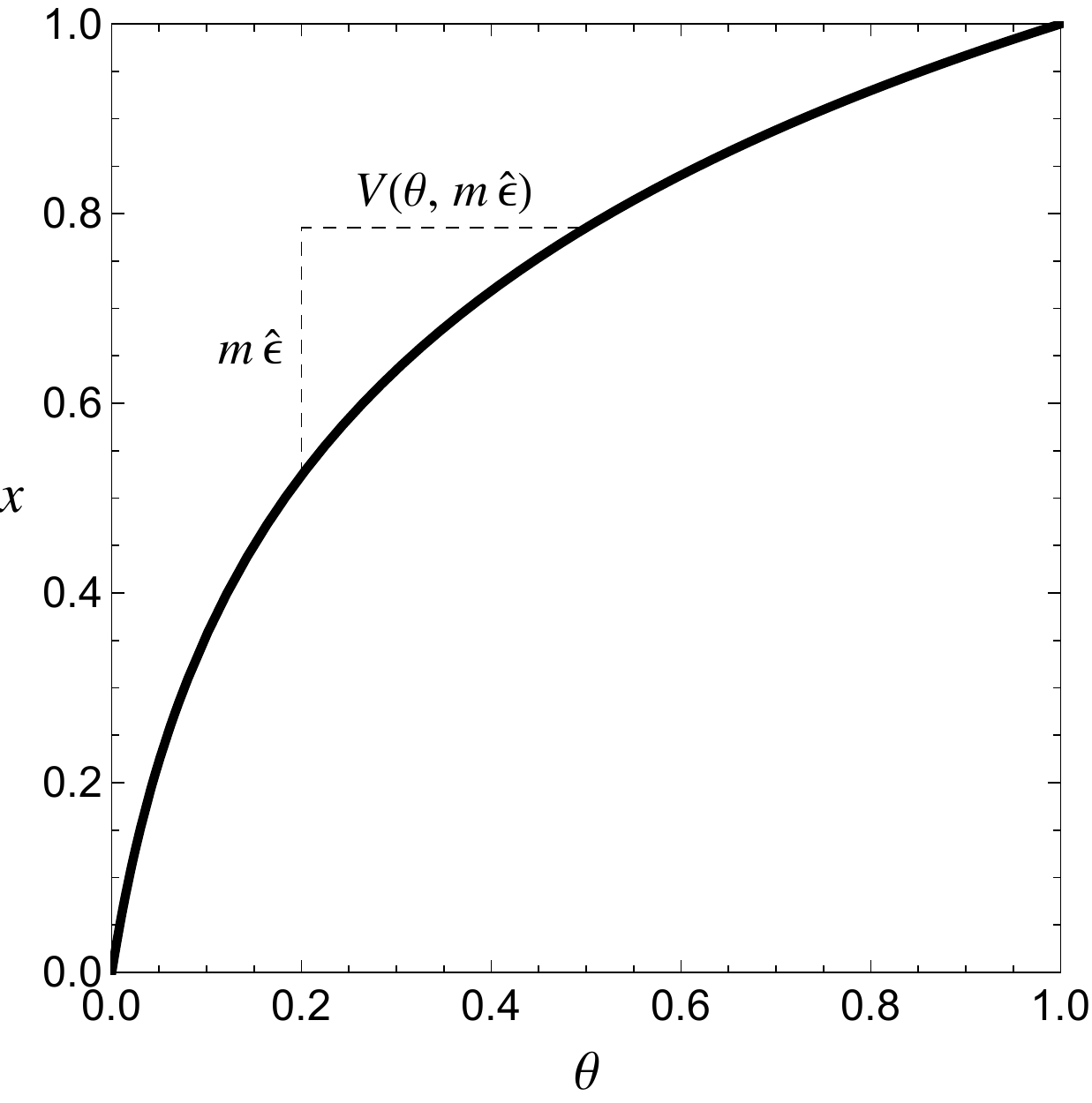}}
  \hfill
  \subfloat[\label{fig/V-plot}]%
  {\includegraphics[width=0.48\linewidth]{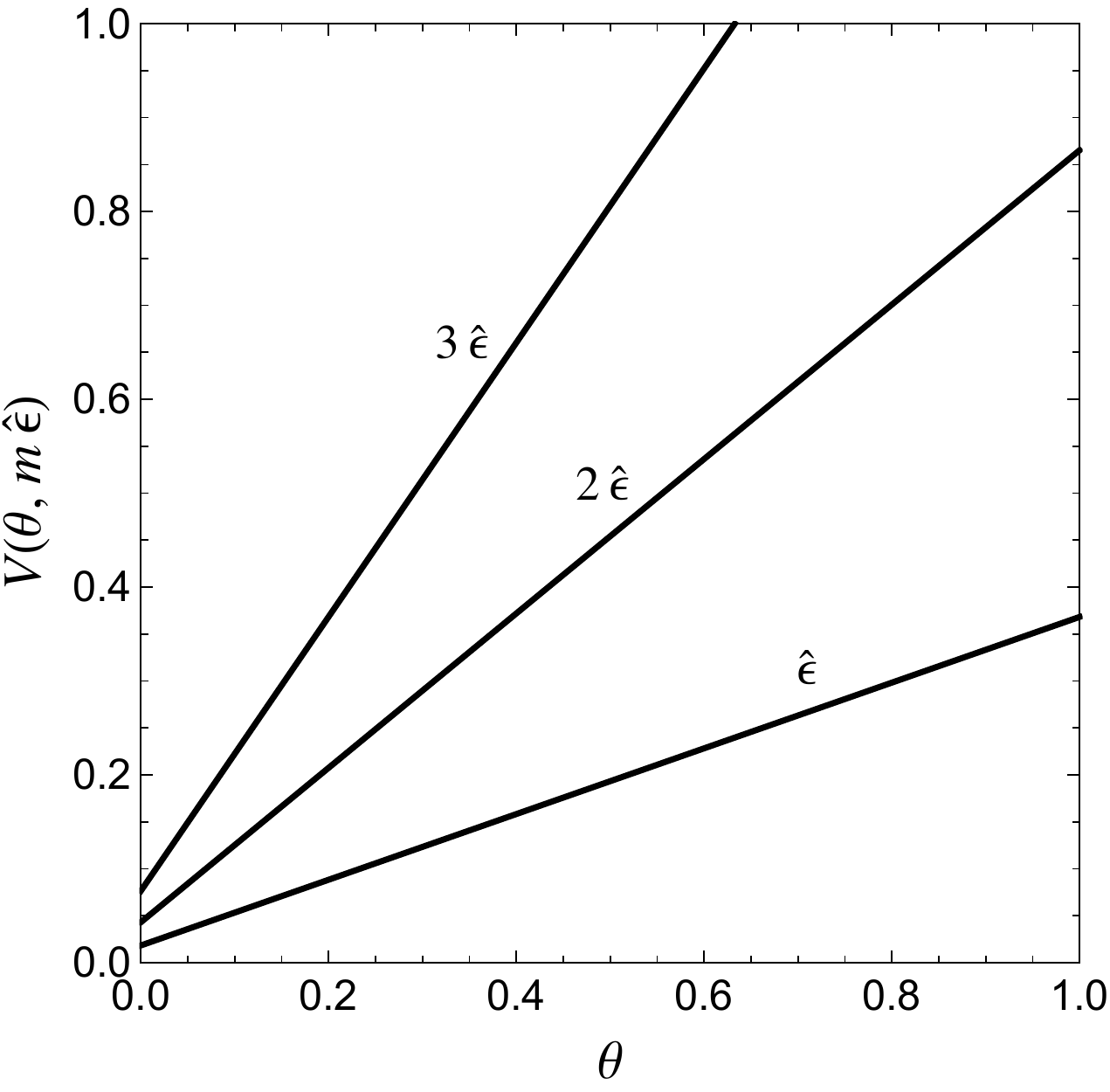}}
  \caption{(a) The function $f_b(\theta)$ for $b=3$. (b) The function $V(\theta,m\hat{\varepsilon})$ for $b=3$, $\hat{\varepsilon}=0.1$, and $m=1,2,3$.\label{PRC}}
\end{figure} 

Typically, one also defines the \emph{pulse response function} (PRF) $V(\theta, \delta)$ representing the change in phase after receiving a pulse of size $\delta$, ignoring the effect of the threshold. Specifically,
\begin{align}\label{eq/prf}
  V(\theta,\delta) = H(\theta,\delta) - \theta = (e^{b\delta} -1)\, \theta + \frac{e^{b\delta}-1}{e^b-1},
\end{align}
see \autoref{fig/V-plot}.
Note that the function $H$ in Eq.~\eqref{eq/H-func} has the property
\begin{align*}
  H(H( \theta,\, \delta),\, \delta') = H(\theta, \delta+\delta').
\end{align*}
implying
\begin{align*}
  H(H( \theta,\, m\,\hat{\varepsilon}),\, m'\,\hat{\varepsilon}) = H(\theta, (m+m')\,\hat{\varepsilon}).
\end{align*}
To simplify notation, for fixed value of $\hat{\varepsilon}$, we write
\begin{align*}
  H(\theta,m\hat{\varepsilon}) = H_m(\theta) \ \ \text{and}\ \ H(\theta,\hat{\varepsilon}) = H_1(\theta) = H(\theta).
\end{align*}

\subsection{Description of the dynamics}

In principle, to determine the dynamics of a system with delay $\tau$ for $t \ge 0$ one should know the phases $\theta_i(t)$, $i=1,\dots,N$ of the oscillators for all $t \in [-\tau,0]$. This information can be encoded in the \emph{phase history function}
\begin{align*}
  \theta: [-\tau,0] \to \mathbb T^n : t \mapsto (\theta_1(t), \dots, \theta_N(t)).
\end{align*}

In the particular system studied here, this description can be further simplified since it is not all the information about the phases in $[-\tau,0]$ that is necessary to determine the future dynamics. Instead, it is enough to know the phases $\theta_i(0)$, $i=1,\dots,N$ at $t=0$ and the \emph{firing moments} of each oscillator in $[-\tau,0]$, that is, the moments when each oscillator reaches the threshold value.

We denote by $-\sigma_i^{(j)}$ the $j$-th firing moment of the $i$-th oscillator in $[-\tau,0]$ and by $\Sigma_i = \{ \sigma_i^{(j)} \}$ the set of all such firings moments. Note that our ordering is
\begin{align*}
  \cdots < - \sigma_i^{(3)} < - \sigma_i^{(2)} < - \sigma_i^{(1)} \le 0.
\end{align*}
To simplify notation we also write $\sigma_i = \sigma_i^{(1)}$ in the case that $\Sigma_i$ contains exactly one element. We call the $\sigma_i^{(j)}$ \emph{firing time distances} (FTD) and $\sigma_i$ the \emph{last firing time distance} (LFTD).

\begin{remark}
  It is shown in \cite{Ashwin2005} that for sufficiently small values of $\varepsilon$ and $\tau$ the size of the set $\Sigma = \bigcup_{i=1}^N \Sigma_i$ is bounded for all $t \ge 0$. The parameter region of interest in the present paper is not covered by the explicit estimates given in \citep{Ashwin2005}. Nevertheless, for the specific orbits in the isoschronous regions we consider, the size of $\Sigma$ remains bounded for all $t \ge 0$.
\end{remark}

The dynamics of the system for $t \ge 0$ can then be determined from the FTD in $[-\tau,0]$ and the phases at $t=0$, i.e., from the set
\begin{align*}
  \Phi = \bigl\{ \{ \sigma_i^{(j)} \}_j, \theta_i \bigr\}_{i=1,\dots,N}.
\end{align*}
When $\Phi$ is a finite set we can ask whether a neighborhood is a finite or infinite dimensional set. \citet{Broer2008a} show that, choosing an appropriate metric on the space of phase history functions, a neighborhood of $\Phi$ is finite dimensional. Nevertheless, this local dimension is not constant and is not bounded throughout the state space.

To describe high-dimensional dynamics it is convenient to introduce a \emph{Poincar\'e surface of section}. Here we choose the surface $\theta_N = 0$, see also \citep{Ashwin2005, Broer2008a}. Given a state $\Phi$ with $\theta_N = 0$ the time evolution of the system produces a new state $\Phi'$ when $\theta_N$ becomes again $0$. This defines the \emph{Poincar\'e map} $\mu: \Phi \to \Phi'$. We call the sequence of points $\mu^j(\Phi)=\mu\left( \mu^{j-1}(\Phi) \right)$, $j=1,2,\dots$, the \emph{Poincar\'e orbit} with initial state $ \mu^{0}(\Phi)=\Phi$. We also define a related concept.

\begin{defn}[Phase orbit]
  Consider a Poincar\'e orbit $\{ \mu^j(\Phi) \}_{j=0,1,2,\dots}$ and let $\mathrm{pr}_\theta$ denote the projection
  \begin{align*}
    \Phi = \bigl\{ \{ \sigma_i^{(j)} \}_j, \theta_i \bigr\}_{i=1,\dots,N} \mapsto \{ \theta_i \}_{i=1,\dots,N-1}. 
  \end{align*}
  Then the \emph{phase orbit} of $\Phi$ is the sequence $\mathrm{pr}_\theta(\mu^j(\Phi))$, $j=0,1,2,\dots$.
\end{defn}

\begin{remark}
  Note that the phase orbit gives only a projection of the dynamics to the space of $N-1$ phase variables $(\theta_1,\dots,\theta_{N-1})$. Since the full dynamics further depends on the firing moments in the time interval $[-\tau,0]$ we cannot define a map $\mathbb T^{N-1} \to \mathbb T^{N-1}$ that depends only on the phases $(\theta_1,\dots,\theta_{N-1})$ and fully encodes the dynamics.
\end{remark}

A Poincar\'e orbit $\{ \mu^j(\Phi) \}_{j=0,1,2,\dots}$ for which $\mu^{j+T_P}(\Phi) = \mu_j(\Phi)$ for all $j \ge 0$ is called \emph{periodic} with Poincar\'e period $T_P$. Note that $T_P$ is not necessarily the minimal period.  By construction, a periodic Poincar\'e orbit corresponds to a periodic orbit in the full state space for the dynamics with continuous time $t \ge 0$. In particular, let $\Phi(t)$ be the state at time $t \ge 0$ corresponding to a periodic Poincar\'e orbit. Then there is a time $T$, corresponding to $T_P$, such that $\Phi(t+T) = \Phi(t)$ for all $t \ge 0$. We call $T$ the \emph{orbit period}.

\section{Isochronous Dynamics}
\label{sec/per-plat}

In this paper we consider a pulse coupled oscillator network with $N=3$ oscillators. We show that there is an open region in the paramater space $(\varepsilon,\tau)$ with families of periodic orbits exhibiting intriguing dynamical behavior. In particular, the periodic orbits are not isolated but for each $(\varepsilon,\tau)$ they fill up a $n\ge2$-dimensional subset in state space, or equivalently, a $n\ge1$-dimensional subset on the Poincar\'e surface of section.

\subsection{Numerical Experiments}
\label{sec/numexp}

We first report the results of numerical experiments for a pulse coupled $3$-oscillator network with delay with parameters $(\varepsilon, \tau)$. Specifically, we numerically compute the orbits of the system starting from a specific class of initial states $\Phi$ on the Poincar\'e surface of section $\theta_3 = 0$. These states are defined by scanning the $(\theta_1,\theta_2)$-space $\mathbb T^2$ and setting $\theta_3 = 0$. As we earlier mentioned this information is not sufficient for determining the dynamics of the system and we also need to know the firing time distances. In this computation, for the oscillators $1$ and $2$ we set
\begin{align}\label{eq/init}
  \Sigma_i = \{ \sigma_i^{(j)} \}= \begin{cases}
    \{ \theta_i \}, &  \text{if $\theta_i \le \tau$} \\
    \emptyset, &  \text{if $\theta_i > \tau$}.
  \end{cases}
\end{align}
Note that this choice of initial states does not exhaustively cover the phase space due to the restrictions imposed on the FTDs. In particular, we could have also considered initial states with more firing moments in $[-\tau,0]$ but our choice is the simplest natural choice and sufficiently reduces the computational time so as to make the computation feasible while allowing to study the system for different parameter values.

We numerically find that all such orbits are \emph{eventually periodic}. There is a time $T_0$ such that for $t \ge T_0$ it holds that $\Phi(t+T) = \Phi(t)$, where $T > 0$ is the eventual orbit \emph{period}. In other words, each initial state converges in finite time to a periodic attractor with period $T$.

In \autoref{fig/all-periodic-points} we show for $(\varepsilon, \tau) = (0.58, 0.58)$ the projection of the periodic attractors to the $(\theta_1,\theta_2)$-space, that is, we show the phase orbits corresponding to the periodic attractors. The figure shows the existence of periodic orbits with \emph{Poincar\'e periods} $T_P \in \{3,4,5\}$. Note that we did not find any attractors with Poincar\'e periods $T_P = 2$ or $T_P \ge 6$ in this computation. Most importantly, we observe that for $(\varepsilon, \tau) = (0.58, 0.58)$ the attractors with Poincar\'e periods $T_P \in \{3,4,5\}$ are not isolated. Projections of periodic attractors with $T_P=3$ appear to fill one-dimensional sets in the $(\theta_1,\theta_2)$-space. Projections of periodic attractors with $T_P=4$ or $T_P=5$ appear to fill one- and two-dimensional sets. In what follows we analytically study the periodic orbits that we numerically observed. We aim to prove that their projections to the $(\theta_1,\theta_2)$-plane fill one- and two-parameter sets and to describe the appearance of these orbits and their properties.

\subsection{Definitions}

To give a systematic description we classify the periodic orbits into equivalence classes. First, we introduce some notation. Let $O$ be a periodic orbit with period $T > 0$ and denote by $P_{i,j}$ the $j$-th pulse received by the $i$-th oscillator in the time interval $[0,T)$. Denote by $n(P_{i,j})$ the \emph{multiplicity} of the pulse $P_{i,j}$, that is, how many simultaneous pulses correspond to $P_{i,j}$.

\begin{figure*}
  \subfloat[Projection of period-3 orbits\label{P3}]%
  {\includegraphics[width=0.32\linewidth]{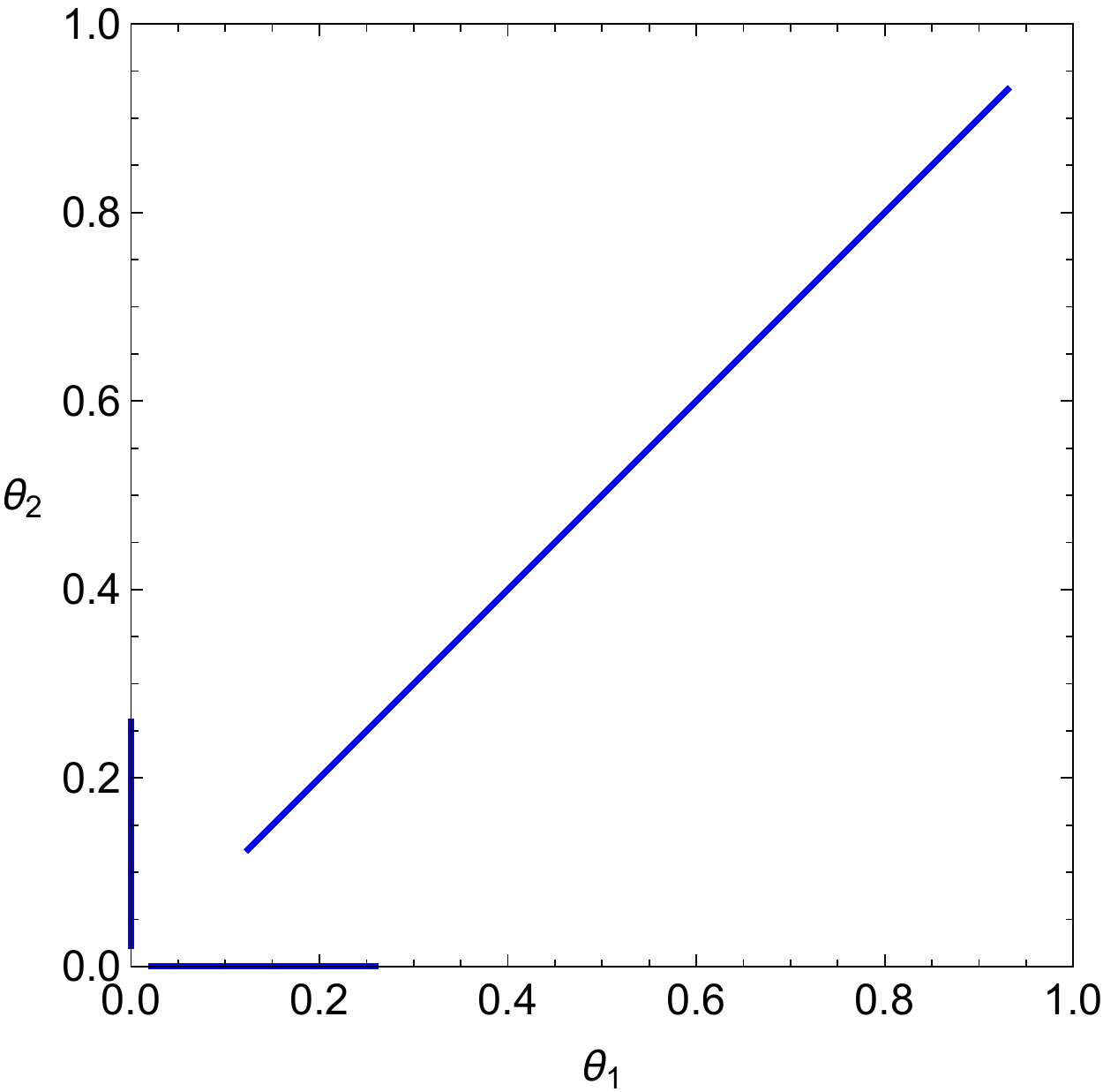}}
\hfill
  \subfloat[Projection of period-4 orbits\label{P4}]%
  {\includegraphics[width=0.32\linewidth]{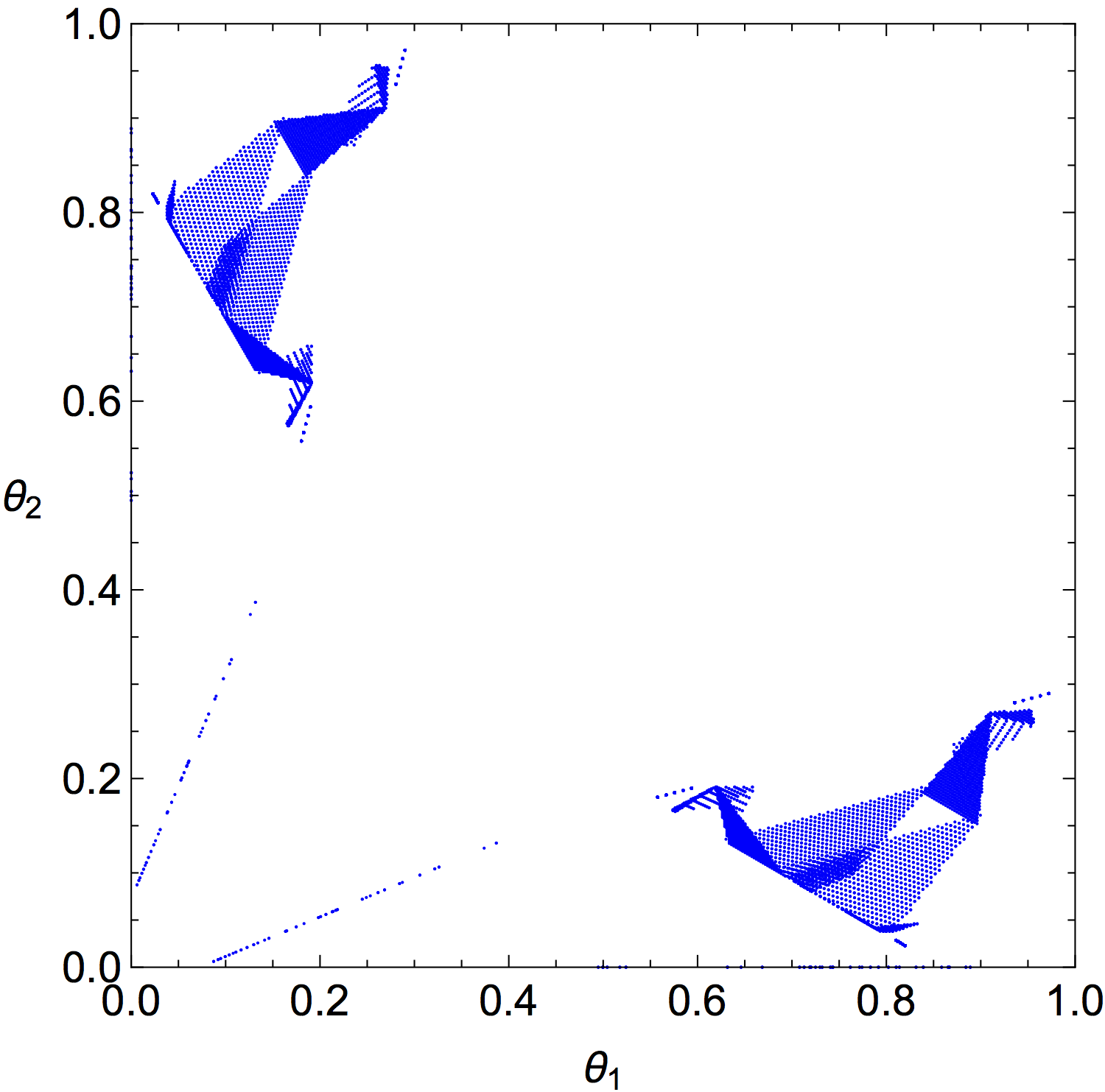}}
\hfill
  \subfloat[Projection of period-5 orbits\label{P5}]%
  {\includegraphics[width=0.32\linewidth]{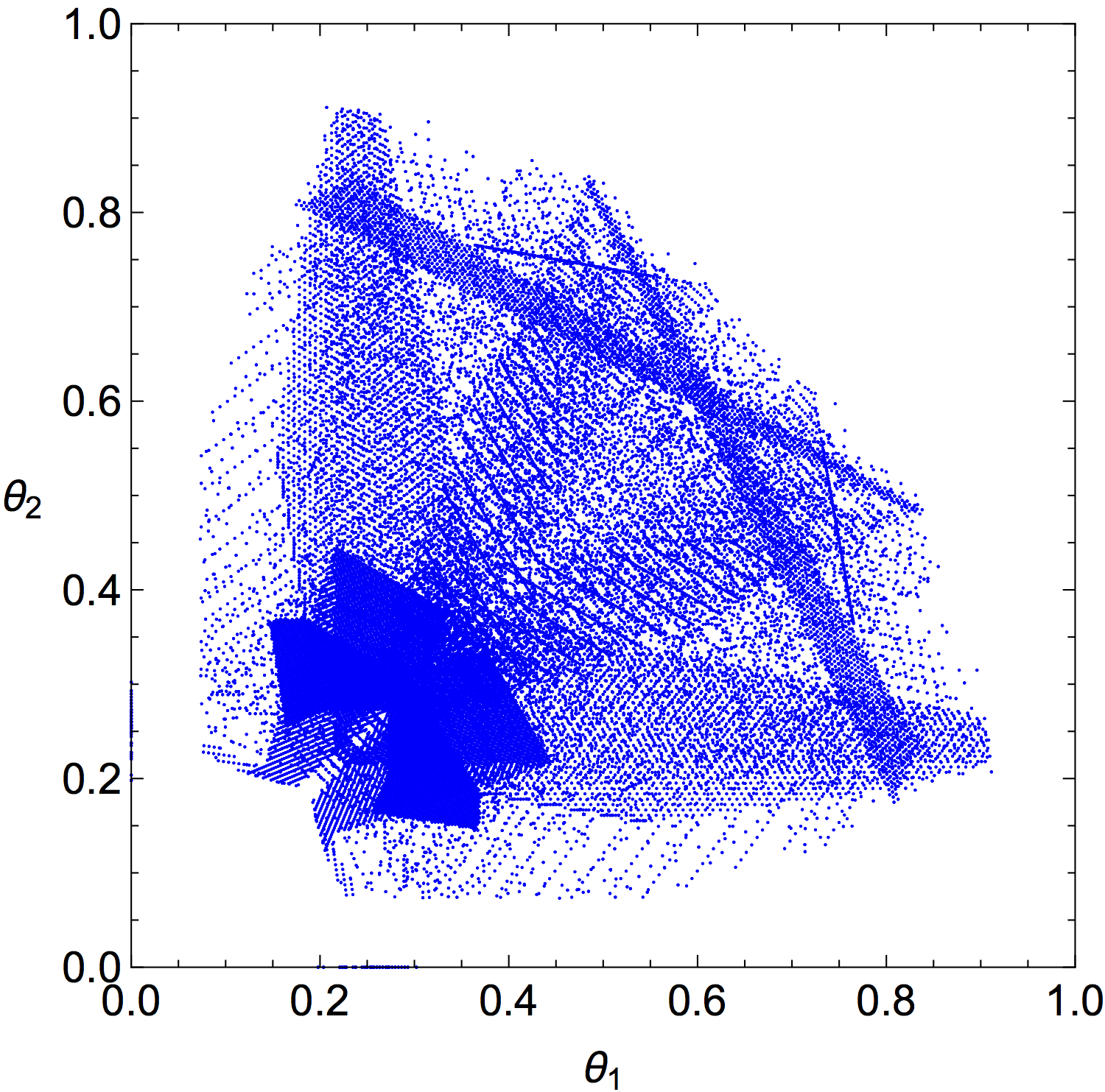}}
\caption{The periodic points of the PCONs of 3-oscillator system under the simulation with parameters $\varepsilon=\tau=0.58$. We choose $(\theta_1(0),\theta_2(0))$ from $[0,1)^2$ with step $10^{-3}$ in each direction. Except for the periodic orbits with period  $T_P \in \{2,3,4\}$, the numerical computation also reveals the existence of the fixed point $(\theta_1,\theta_2)=(0,0)$.}
\label{fig/all-periodic-points}
\end{figure*}

\begin{defn}\label{defn/pulse-eqv}
  Two periodic orbits $O$ and $O'$ are \emph{pulse equivalent} if they have the same periods $T=T' > 0$, the sets $\{ P_{i,j} \}$ and $\{ P'_{i,j} \}$ have the same cardinalities, and $n(P_{i,j}) = n(P'_{i,j})$ for all $i,j$.
\end{defn}

We now define an isochronous region. For this we ask that not only the orbit periods in an isochronous region are the same but the stronger condition that the orbits are pulse equivalent. 

\begin{defn}
  A subset $\mathcal B$ of the state space is an \emph{isochronous region} of period $T$ (or Poincar\'e period $T_P$) if
  \begin{enumerate}[label={(\alph*)}]
  \item all orbits starting in $\mathcal B$ are pulse equivalent with period $T$ (or Poincar\'e period $T_P$),
  \item each orbit starting in $\mathcal B$ stays within $\mathcal B$, and
  \item there is a homeomorphism $S$ between the space of orbits in $\mathcal B$ and an open, connected, subset $\Omega$ of $\mathbb{R}^k$, $k \ge 1$.
  \end{enumerate}
\end{defn}

\begin{remark}
   $\mathcal B$ is required to be invariant under the $\mathbb R_+$ action induced by the dynamics. This allows to define the space of orbits $\mathcal B / \mathbb R_+$ obtained by reducing $\mathcal B$ with respect to the $\mathbb R_+$ action. Note that the requirement that $\mathcal B / \mathbb R_+$ is connected does not imply that $\mathcal B$ is also connected since each periodic orbit in $\mathcal B$ may be disconnected. The requirement that $\dim \mathcal B / \mathbb R_+ \ge 1$ implies that isolated periodic orbits are excluded.
\end{remark}

With these definitions in place, we now turn to the detailed description of one of the isochronous regions that we numerically identified in \autoref{sec/per-plat}.

\section{The isochronous region IR4}
\label{sec/ir4}

In this section we select one of the numerically observed isochronous regions, describe its periodic orbits, and discuss its existence. In \autoref{sec/stability} we consider the dynamical stability of the periodic orbits. Specifically, we focus on the orbits with $T_P=4$ appearing in the lower right corner of \autoref{P4}. We denote the corresponding isochronous region by IR4.

\subsection{Description}

We have verified, analytically and numerically, that all periodic orbits represented by these points can be parameterized by the firing time distances (FTD) $(\sigma_1,\sigma_2,\sigma_3)$ of the three oscillators. We first prove the following slightly more general result which is also useful for determining the stability of the periodic orbits, see \autoref{sec/stability}.

\begin{prop}[Dynamics]\label{prop/dynamics-1}
  Consider the initial state of the pulse coupled $3$-oscillator network on the Poincar\'e surface of section $\theta_3 = 0$, determined by the phases $(\theta_1,\theta_2,0)$ and the firing time distances $(\{\sigma_1\},\{\sigma_2\},\{\sigma_3\})$ satisfying:
  \begin{enumerate}[label={(\alph*)}]
  \item $0 < \sigma_2 < \sigma_1 < \sigma_3 < \tau$,
  \item $H_* < \theta_1 + \tau - \sigma_3 < 1$,
  \item $H_* < H(\theta_2+\tau-\sigma_3)-\sigma_1+\sigma_3 < 1$, 
  \item $H_* < H(\tau-\sigma_1)-\sigma_2+\sigma_1 < 1$,
  \item $H(\sigma_3 - \sigma_2) < 1$.
  \end{enumerate}
  Then the dynamics of the system induces the Poincar\'e map
  \begin{align}
    \begin{aligned}
      G &: (\sigma_1,\sigma_2,\sigma_3;\theta_1,\theta_2) \mapsto \\
      & \quad (\sigma_3-\sigma_2,\sigma_1-\sigma_2,\tau-\sigma_2;H(\sigma_3-\sigma_2),\sigma_1-\sigma_2).
    \end{aligned}
        \label{eq/pmap-1}
  \end{align}
\end{prop}

\begin{proof}
  We use the event sequence representation of the dynamics, see \citep{Broer2008a}. In particular, we denote by $[P,(i_1,\dots,i_k),t]$ a pulse that will be received by the oscillators $i_1,\dots,i_k$ after time $t$. We denote by $[F,i,t]$ the event corresponding to the oscillator $i$ firing after time $t$, further implying that $\theta_i = 1 - t$. The initial condition given by phases $(\theta_1,\theta_2,0)$ and firing time distances $(\{\sigma_1\},\{\sigma_2\},\{\sigma_3\})$ corresponds to the event sequence
  \begin{align*}
    &
      [P,(1,2),\tau-\sigma_3],\,
      [P,(2,3),\tau-\sigma_1],\,
      [P,(1,3),\tau-\sigma_2],
    \\ &
      [P,(1,2),\tau];\,
      [F,1,1-\theta_1],\,
      [F,2,1-\theta_2],\,
      [F,3,1].
  \end{align*}
  Note that we write pulse events separately from fire events, keeping the time ordering in each of the subsets. In particular, this implies that $0 < \sigma_2 < \sigma_1 < \sigma_3 < \tau$ and that $0 < \theta_2 < \theta_1 < 1$. 

  The inequality $\theta_1 + \tau - \sigma_3 < 1$ implies that the first pulse event will be processed first. Then the next event sequences will be
  \begin{align*}
    \overset{1}{\longrightarrow}\quad
    &
      [P,(1,2),0],\,
      [P,(2,3),\sigma_3-\sigma_1],\,
      [P,(1,3),\sigma_3-\sigma_2],\,
    \\
    &
      [P,(1,2),\sigma_3];\,
      [F,1,1-\theta_1-\tau+\sigma_3],\,
    \\
    &
      [F,2,1-\theta_2-\tau+\sigma_3],\,
      [F,3,1-\tau+\sigma_3]
    \\
    \overset{2}{\longrightarrow}\quad
    & [P,(2,3),\sigma_3-\sigma_1],\,
      [P,(1,3),\sigma_3-\sigma_2],\,
      [P,(1,2),\sigma_3];\,
    \\
    & [F,1,0],\, 
      [F,2,1-H(\theta_2+\tau-\sigma_3)],\,
      [F,3,1-\tau+\sigma_3].
  \end{align*}
  Here we used the assumptions that $\theta_1 + \tau - \sigma_3 > H_*$ and $\theta_2 + \tau - \sigma_3 < H_*$. The next event sequence is
  \begin{align*}
    \overset{3}{\longrightarrow}\quad
    & [P,(2,3),\sigma_3-\sigma_1],\,
      [P,(1,3),\sigma_3-\sigma_2],\,
    \\
    &
      [P,(1,2),\sigma_3],\,
      [P,(2,3),\tau];\,
      [F,2,1-H(\theta_2+\tau-\sigma_3)],\,
    \\
    &
      [F,3,1-\tau+\sigma_3],\,
      [F,1,1]
    \\
    \overset{4}{\longrightarrow}\quad
    & [P,(2,3),0],\,
      [P,(1,3),\sigma_1-\sigma_2],\,
      [P,(1,2),\sigma_1],\,
    \\
    &
      [P,(2,3),\tau+\sigma_1-\sigma_3];\,
    \\
    &
      [F,2,1-H(\theta_2+\tau-\sigma_3)+\sigma_1-\sigma_3],\,
    \\
    &
      [F,3,1-\tau+\sigma_1],\,
      [F,1,1+\sigma_1-\sigma_3]
    \\
    \overset{5}{\longrightarrow}\quad
    & [P,(1,3),\sigma_1-\sigma_2],\,
      [P,(1,2),\sigma_1],\,
    \\
    &
      [P,(2,3),\tau+\sigma_1-\sigma_3];\,
      [F,2,0],\,
    \\
    &
      [F,3,1-H(\tau-\sigma_1)],\,
      [F,1,1+\sigma_1-\sigma_3]
    \\
    \overset{6}{\longrightarrow}\quad
    & [P,(1,3),\sigma_1-\sigma_2],\,
      [P,(1,2),\sigma_1],\,
    \\
    &
      [P,(2,3),\tau+\sigma_1-\sigma_3],\,
      [P,(1,3),\tau];\,
    \\
    &
      [F,3,1-H(\tau-\sigma_1)],\,
      [F,1,1+\sigma_1-\sigma_3],\,
      [F,2,1].
  \end{align*}
  The inequality $H(\theta_2+\tau-\sigma_3)-\sigma_1+\sigma_3 <1$ implies again that the first pulse event was processed first and then $H(\theta_2+\tau-\sigma_3)-\sigma_1+\sigma_3 > H_*$ that oscillator $2$ fires. Moreover, the assumption $\tau - \sigma_1 < H_*$ ensures that oscillator $3$ does not fire.
  The next event sequence is
  \begin{align*}
    \overset{7}{\longrightarrow}\quad
    & [P,(1,3),0],\,
      [P,(1,2),\sigma_2],\,
      [P,(2,3),\tau+\sigma_2-\sigma_3],\,
    \\
    &
      [P,(1,3),\tau+\sigma_2-\sigma_1];\,
    \\
    &
      [F,3,1-H(\tau-\sigma_1)+\sigma_2-\sigma_1],\,
    \\
    &
      [F,1,1+\sigma_2-\sigma_3],\,
      [F,2,1+\sigma_2-\sigma_1]
    \\
    \overset{8}{\longrightarrow}\quad
    & [P,(1,2),\sigma_2],\,
      [P,(2,3),\tau+\sigma_2-\sigma_3],\,
    \\
    &
      [P,(1,3),\tau+\sigma_2-\sigma_1];\,
      [F,3,0],\,
    \\
    &
      [F,1,1-H(\sigma_3-\sigma_2)],\,
      [F,2,1+\sigma_2-\sigma_1].
  \end{align*}
  Here, by the assumptions $H_* < H(\tau-\sigma_1)-\sigma_2+\sigma_1 < 1$ and $H(\sigma_3 - \sigma_2) + \sigma_2< 1$, we have
  \begin{align*}
    \overset{9}{\longrightarrow}\quad
    & [P,(1,2),\sigma_2],\,
      [P,(2,3),\tau+\sigma_2-\sigma_3],\,
    \\
    &
      [P,(1,3),\tau+\sigma_2-\sigma_1],\,
      [P,(1,2),\tau]
      ;\,
    \\
    & [F,1,1-H(\sigma_3-\sigma_2)],\,
      [F,2,1+\sigma_2-\sigma_1],\,
      [F,3,1],
  \end{align*}
  thus proving the statement.
\end{proof}

Let $\Omega_{\varepsilon,\tau}$ be the subset of the $(\sigma_1,\sigma_2,\sigma_3)$-space defined by the relations
\begin{subequations}
  \begin{align}\label{eq/ineq-pulse-class-A}
    \begin{aligned}
      & 0 < \sigma_2 < \sigma_1 < \sigma_3 < \tau, \\
      & H_* \le F_k(\bm{\sigma}; \tau) \le 1, \ \text{$k=1,2,3,4$},
    \end{aligned}
  \end{align}
  where
  \begin{align}\label{eq/def-F}
    \begin{aligned}
      F_1(\bm{\sigma};\tau) &:= H(\sigma_1)+\tau-\sigma_3, \\
      F_2(\bm{\sigma};\tau) &:= H(\tau-\sigma_3+\sigma_2)+\sigma_3-\sigma_1, \\
      F_3(\bm{\sigma};\tau) &:= H(\tau-\sigma_1)+\sigma_1-\sigma_2, \\
      F_4(\bm{\sigma};\tau) &:= H(\sigma_3-\sigma_2)+\sigma_2, \\
    \end{aligned}
  \end{align}
\end{subequations}
and
\begin{align*}
  H_* &= H_1^{-1}(1) = \frac{e^b-e^{b\hat{\varepsilon}}}{(e^b-1)e^{b\hat{\varepsilon}}}.
\end{align*}
Moreover, define the map
\begin{align}
\begin{aligned}
  S&: (\sigma_1, \sigma_2, \sigma_3)
  \mapsto  (\theta_1, \theta_2, \theta_3; \{\sigma_1\}, \{\sigma_2\}, \{\sigma_3\}) \\
  &= (H(\sigma_1), \sigma_2, 0; \{\sigma_1\}, \{\sigma_2\}, \{\sigma_3\}),
\end{aligned}
\end{align}
from $\Omega_{\varepsilon,\tau}$ to the space of initial conditions of the PCON. Then we prove the following statement.

\begin{prop}\label{prop/dynamics-2}
  Consider the initial state of the pulse coupled $3$-oscillator network on the Poincar\'e surface of section $\theta_3 = 0$, given by $S(\bm{\sigma})$ for $\bm{\sigma} \in \Omega_{\varepsilon,\tau}$. Then the map
  \begin{align}
    \label{eq/pmap-g-sigma}
    g : (\, \sigma_1,\, \sigma_2,\, \sigma_3\,) \mapsto (\,\sigma_3-\sigma_2,\, \sigma_1-\sigma_2,\, \tau-\sigma_2\,)
  \end{align}
  has the following properties:
  \begin{enumerate}[label={(\alph*)}]
  \item $g(\Omega_{\varepsilon,\tau}) = \Omega_{\varepsilon,\tau}$;
  \item $G(S(\bm{\sigma})) = S(g(\bm{\sigma}))$ for all $\bm{\sigma} \in \Omega_{\varepsilon,\tau}$, where $G$ is the Poincar\'e map \eqref{eq/pmap-1}.
  \end{enumerate}
\end{prop}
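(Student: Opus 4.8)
The plan is to prove the two assertions separately: part (b) as a direct application of Proposition~\ref{prop/dynamics-1}, and part (a) by exhibiting a cyclic symmetry of the inequalities \eqref{eq/def-F} under $g$.

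For part (b), I would first check that for every $\bm{\sigma} \in \Omega_{\varepsilon,\tau}$ the state $S(\bm{\sigma})$ satisfies the hypotheses (a)--(e) of Proposition~\ref{prop/dynamics-1}. Writing the phases as $\theta_1 = H(\sigma_1)$ and $\theta_2 = \sigma_2$, the ordering condition of $\Omega_{\varepsilon,\tau}$ is hypothesis (a), and substituting $\theta_1,\theta_2$ into hypotheses (b)--(d) reproduces exactly the bounds $H_* \le F_k(\bm{\sigma};\tau) \le 1$ for $k=1,2,3$ as defined in \eqref{eq/def-F}; I would verify these substitutions term by term. The threshold condition on oscillator $3$ used at the end of the proof of Proposition~\ref{prop/dynamics-1}, namely $H(\sigma_3-\sigma_2)+\sigma_2 \le 1$, is supplied by $F_4 \le 1$, and the remaining consistency requirement $0 < \theta_2 < \theta_1 < 1$ follows because $H$ strictly increases phase (so $\theta_2 = \sigma_2 < \sigma_1 < H(\sigma_1) = \theta_1$) and $\theta_1 = F_1 - (\tau-\sigma_3) < F_1 \le 1$. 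With the hypotheses verified, Proposition~\ref{prop/dynamics-1} yields $G(S(\bm{\sigma}))$ explicitly, and a direct comparison of its firing distances $(\sigma_3-\sigma_2,\sigma_1-\sigma_2,\tau-\sigma_2)$ and phases $(H(\sigma_3-\sigma_2),\sigma_1-\sigma_2,0)$ with $S(g(\bm{\sigma}))$ shows they coincide. The conceptual point is that the image again lies on the slice parameterized by $S$: the output phase of oscillator $1$ is $H$ of its new last firing distance $\sigma_3-\sigma_2$, and the output phase of oscillator $2$ equals its new last firing distance $\sigma_1-\sigma_2$. This self-consistency is precisely the identity $G \circ S = S \circ g$.

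For part (a) the crux is the cyclic identity
\[
  (F_1, F_2, F_3, F_4) \circ g = (F_4, F_1, F_2, F_3),
\]
that is, $F_k \circ g = F_{k-1}$ with indices read modulo $4$. I would prove this by inserting $g(\bm{\sigma}) = (\sigma_3-\sigma_2,\sigma_1-\sigma_2,\tau-\sigma_2)$ into each $F_k$ from \eqref{eq/def-F}; each argument collapses algebraically, for instance $F_2(g(\bm{\sigma})) = H(\tau-(\tau-\sigma_2)+(\sigma_1-\sigma_2)) + (\tau-\sigma_2)-(\sigma_3-\sigma_2) = H(\sigma_1)+\tau-\sigma_3 = F_1(\bm{\sigma})$, and the other three cases are analogous. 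Notably these identities follow from the affine form of $g$ and the definitions \eqref{eq/def-F} alone, not from any special property of $H$. I would then record that $g$ is an invertible affine map, with linear part of determinant $-1$ and inverse $g^{-1}(\sigma_1,\sigma_2,\sigma_3) = (\sigma_2+\tau-\sigma_3,\,\tau-\sigma_3,\,\sigma_1+\tau-\sigma_3)$, and that both $g$ and $g^{-1}$ send the strict ordering $0 < \sigma_2 < \sigma_1 < \sigma_3 < \tau$ to itself, which amounts to a few one-line inequalities.

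With these ingredients the invariance is immediate. Since $g$ preserves the ordering and merely permutes the four bounds $H_* \le F_k \le 1$ cyclically, any $\bm{\sigma} \in \Omega_{\varepsilon,\tau}$ satisfies all four bounds and hence so does $g(\bm{\sigma})$, giving $g(\Omega_{\varepsilon,\tau}) \subseteq \Omega_{\varepsilon,\tau}$; the same argument applied to $g^{-1}$ (using $F_k \circ g^{-1} = F_{k+1}$, a consequence of the cyclic identity) yields the reverse inclusion, so $g(\Omega_{\varepsilon,\tau}) = \Omega_{\varepsilon,\tau}$. The main obstacle I anticipate is not conceptual but a matter of care: carrying out the four substitutions of the cyclic identity without index or sign slips, and handling the boundary of $\Omega_{\varepsilon,\tau}$, where the bounds in \eqref{eq/def-F} hold with equality while Proposition~\ref{prop/dynamics-1} is stated with strict inequalities. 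I would therefore establish $G \circ S = S \circ g$ on the interior, where Proposition~\ref{prop/dynamics-1} applies verbatim, and extend it to the boundary by continuity of the event dynamics used there.
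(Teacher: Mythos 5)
Your proposal is correct and follows essentially the same route as the paper: check that $\Omega_{\varepsilon,\tau}$ is $g$-invariant, verify that $S(\bm{\sigma})$ satisfies the hypotheses of Proposition~\ref{prop/dynamics-1}, and read off $G\circ S = S\circ g$ from its conclusion. The paper compresses the invariance step into ``one easily checks''; your cyclic identity $F_k\circ g = F_{k-1}$ (indices mod $4$), together with the ordering and invertibility checks, is precisely the computation that claim hides, and your remark about the non-strict inequalities on the boundary of $\Omega_{\varepsilon,\tau}$ versus the strict hypotheses of Proposition~\ref{prop/dynamics-1} addresses a point the paper glosses over.
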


\begin{proof}
  First, one easily checks that if $\bm{\sigma} \in \Omega_{\varepsilon,\tau}$ then $g(\bm{\sigma}) \in \Omega_{\varepsilon,\tau}$ and vice versa. Then, note that if $\bm{\sigma} \in \Omega_{\varepsilon,\tau}$ then $S(\bm{\sigma})$ satisfies the conditions of Proposition~\ref{prop/dynamics-1}. This implies
  \begin{align*}
    G(S(\bm{\sigma}))
    & = (\sigma_3-\sigma_2,\sigma_1-\sigma_2,\tau-\sigma_2;\; H(\sigma_3-\sigma_2),\sigma_1-\sigma_2) \\
    & = S(g(\bm{\sigma})).
  \end{align*}
\end{proof}

Proposition~\ref{prop/dynamics-2} shows that $S$ intertwines the map $g$ on $\Omega_{\varepsilon,\tau}$ with the Poincar\'e map $G$. We then have the following description of the dynamics in $\Omega_{\varepsilon,\tau}$.

\begin{prop}\label{prop/dynamics-3}
  The map $g$ on $\Omega_{\varepsilon,\tau}$ has period $4$, that is, $g^4(\bm{\sigma}) = \bm{\sigma}$ for all $\bm{\sigma} \in \Omega_{\varepsilon,\tau}$. The point $\bm{\sigma}_* := (\tau/2,\tau/4,3\tau/4) \in \Omega_{\varepsilon,\tau}$ is a fixed point of $g$, and points
  $\bm{\sigma} \in \Omega_{\varepsilon,\tau}$ along the line parameterized by $\bm{\sigma} = \bm{\sigma}_* + t (0,1,1)$, $t \in \mathbb R$, are period-$2$ points of $g$.
\end{prop}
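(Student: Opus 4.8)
The plan is to exploit the fact that the map $g$ in \eqref{eq/pmap-g-sigma} is affine and to reduce every claim to linear algebra on its linear part. Writing $g(\bm\sigma) = A\bm\sigma + \bm c$ with
\[
A = \begin{pmatrix} 0 & -1 & 1 \\ 1 & -1 & 0 \\ 0 & -1 & 0 \end{pmatrix}, \qquad \bm c = (0,0,\tau)^\top,
\]
I would first compute the characteristic polynomial of $A$ and find $p(\lambda) = \lambda^3 + \lambda^2 + \lambda + 1 = (\lambda+1)(\lambda^2+1)$, whose roots $-1, \pm i$ are exactly the nontrivial fourth roots of unity. This single observation drives everything that follows.

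For the period-$4$ claim I would invoke Cayley--Hamilton to obtain $A^3 + A^2 + A + I = 0$. Iterating the affine map gives $g^4(\bm\sigma) = A^4\bm\sigma + (A^3+A^2+A+I)\bm c$. Multiplying the Cayley--Hamilton relation by $(A-I)$ yields $A^4 - I = 0$, so $A^4 = I$, while the same relation makes the accumulated translation $(A^3+A^2+A+I)\bm c$ vanish identically. Hence $g^4 = \mathrm{id}$ on all of $\mathbb R^3$, and in particular on $\Omega_{\varepsilon,\tau}$, which is $g$-invariant by Proposition~\ref{prop/dynamics-2}. For the fixed point I would simply substitute $\bm\sigma_* = (\tau/2,\tau/4,3\tau/4)$ and check $g(\bm\sigma_*) = \bm\sigma_*$ directly; since $\lambda=1$ is not a root of $p$, this is the unique fixed point $(I-A)^{-1}\bm c$. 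For the period-$2$ line I would verify that $\bm v = (0,1,1)^\top$ is an eigenvector of $A$ with eigenvalue $-1$, so that on the affine line $\bm\sigma_* + t\bm v$ the map acts by reflection through $\bm\sigma_*$, namely $g(\bm\sigma_* + t\bm v) = \bm\sigma_* - t\bm v$. Consequently $g^2$ fixes the line pointwise while $g$ moves every point with $t \ne 0$, yielding genuine period-$2$ points.

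The only mildly delicate step is confirming the membership statements $\bm\sigma_* \in \Omega_{\varepsilon,\tau}$ and that a nondegenerate segment of the line lies in $\Omega_{\varepsilon,\tau}$. Here I expect a useful simplification: substituting $\bm\sigma_*$ into the functions $F_k$ of \eqref{eq/def-F} collapses all four to the common value $F_1=F_2=F_3=F_4 = H(\tau/2)+\tau/4$, so the four inequalities $H_* \le F_k \le 1$ reduce to the single condition $H_* \le H(\tau/2)+\tau/4 \le 1$. On the line one finds $F_1=F_3 = H(\tau/2)+\tau/4-t$ and $F_2=F_4 = H(\tau/2)+\tau/4+t$, while $\sigma_1=\tau/2$ stays fixed; together with $0<\sigma_2<\sigma_1<\sigma_3<\tau$ (which forces $-\tau/4 < t < \tau/4$) the admissible $t$ form a symmetric interval about $0$.

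I do not anticipate a genuine obstacle: the core statements are pure linear algebra, and the membership interval for $t$ is nonempty precisely when the fixed-point condition $H_* < H(\tau/2)+\tau/4 < 1$ holds strictly. The single remaining task is therefore to record that the parameter region of interest satisfies this open inequality, which simultaneously guarantees $\bm\sigma_* \in \Omega_{\varepsilon,\tau}$ and produces the one-parameter family of period-$2$ points, consistent with the one-dimensional set of periodic points observed numerically.
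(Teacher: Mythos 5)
Your proposal is correct and takes essentially the same route as the paper: the paper also reduces to the linear part $L=A$ (by translating coordinates to the fixed point $\bm{\sigma}_*$) and concludes $L^4=I$, merely verifying this by observing that $L^2$ is a rotation by $\pi$ about the axis $t(0,1,1)$ rather than via the characteristic polynomial and Cayley--Hamilton. Your additional checks --- uniqueness of the fixed point since $1$ is not an eigenvalue, and the explicit reduction of the membership conditions to $H_*\le H(\tau/2)+\tau/4\le 1$ with $-\tau/4<t<\tau/4$ on the period-$2$ line --- are correct and in fact supply details the paper defers to its existence proposition.
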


\begin{proof}
  The proof of the statement is a straightforward computation. Nevertheless, it is more enlightening to proceed in a different way. Let
\begin{align*}
  \bm{\sigma} = \bm{\sigma}_* + \bm{s},
\end{align*}
where $\bm{s} = (s_1,s_2,s_3)$. In terms of $\bm{s}$, $g$ becomes the linear map
\begin{align*}
  g(\bm{s}) = L \bm{s},
\end{align*}
where
\begin{align*}
  L =
  \begin{pmatrix}
    0 & -1 & 1 \\
    1 & -1 & 0 \\
    0 & -1 & 0
  \end{pmatrix}.
\end{align*}
Clearly, $\bm{s}=0$ is the only fixed point of $L$. One checks that $L^2$ acts as rotation by $\pi$ about the line $\bm{s} = t(0,1,1)$, $t \in \mathbb R$. Therefore, $L^2$ leaves this line invariant, and $L^4$ is the identity.
\end{proof}

\begin{remark}
  Proposition \ref{prop/dynamics-2} implies that the Poincar\'e map $G$ has Poincar\'e period $T_P = 4$ for each $S(\bm{\sigma})$, $\bm{\sigma} \in \Omega_{\varepsilon,\tau}$. The evolution of the phases of the $3$ oscillators for such orbits is depicted in \autoref{fig/p4p4} and the detailed dynamics is given in \autoref{tbl/pulse-class-A}. The set $\Omega_{\varepsilon,\tau}$ also gives rise to periodic orbits with smaller minimal orbit period than $T = 3\tau$. In particular, there is a line in $\Omega_{\varepsilon,\tau}$ given by $(\sigma_1,\sigma_2,\sigma_3) = (\tau/2,\sigma_2,\tau/2+\sigma_2)$ for which all points give rise to period $T=3\tau/2$ orbits ($T_P=2$), see \autoref{fig/p2p4}. One point along this line, having $\sigma_2 = \tau/4$ gives rise to a period $T=3\tau/4$ orbit ($T_P=1$), see \autoref{fig/p1p4}.
\end{remark}

\begin{figure}[tp]
  \subfloat[\label{fig/p4p4}]{\includegraphics[width=\linewidth]{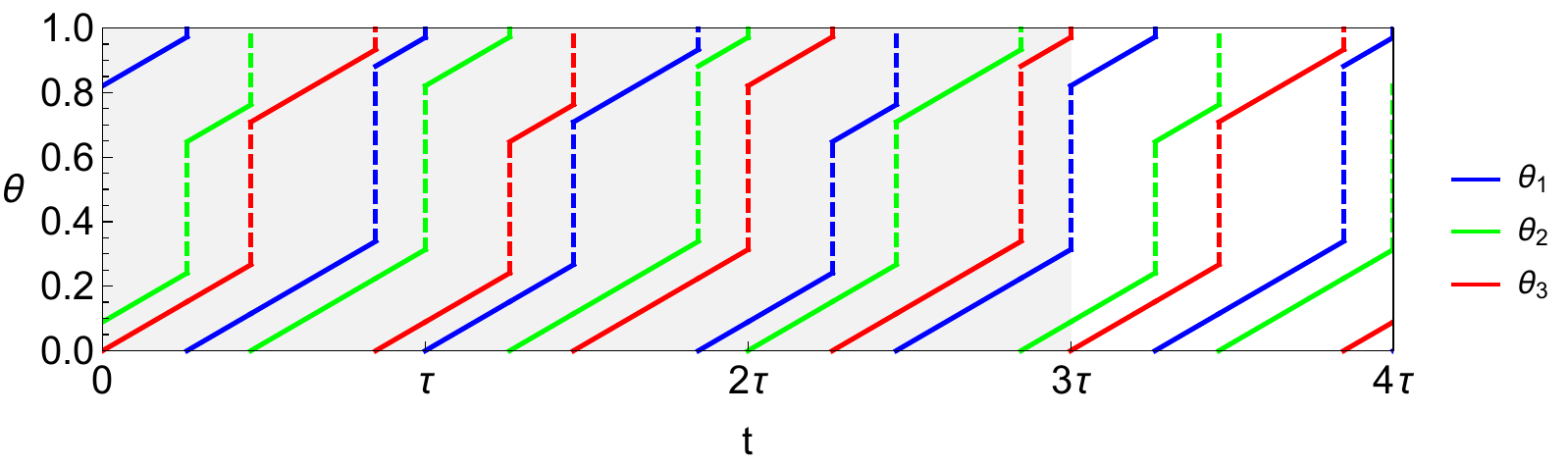}} \\
  \subfloat[\label{fig/p2p4}]{\includegraphics[width=\linewidth]{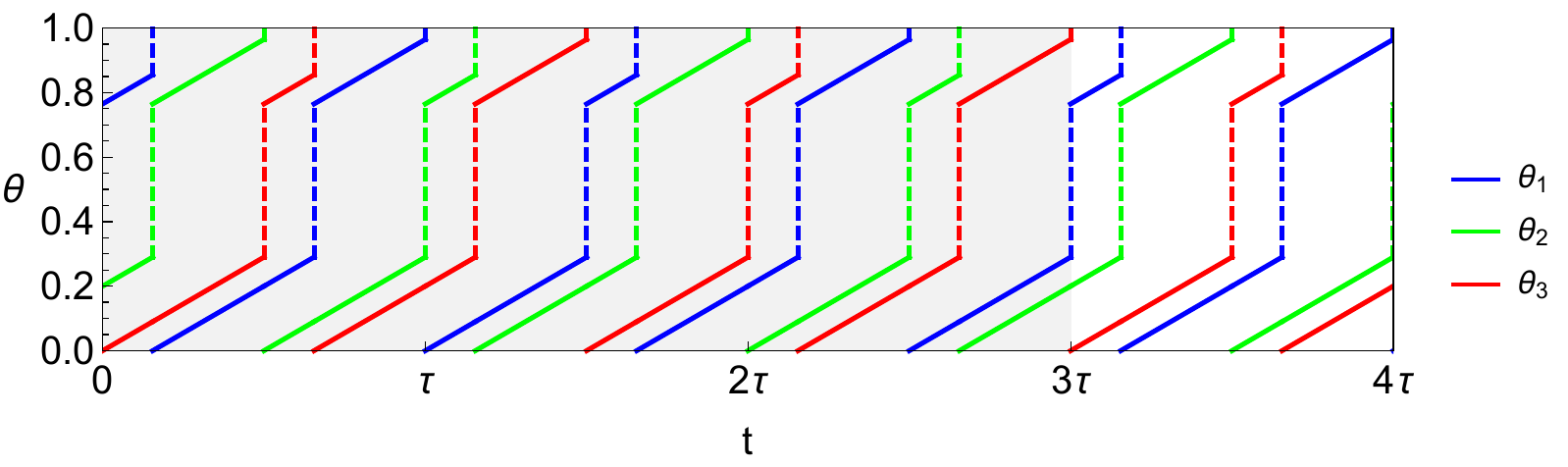}} \\
  \subfloat[\label{fig/p1p4}]{\includegraphics[width=\linewidth]{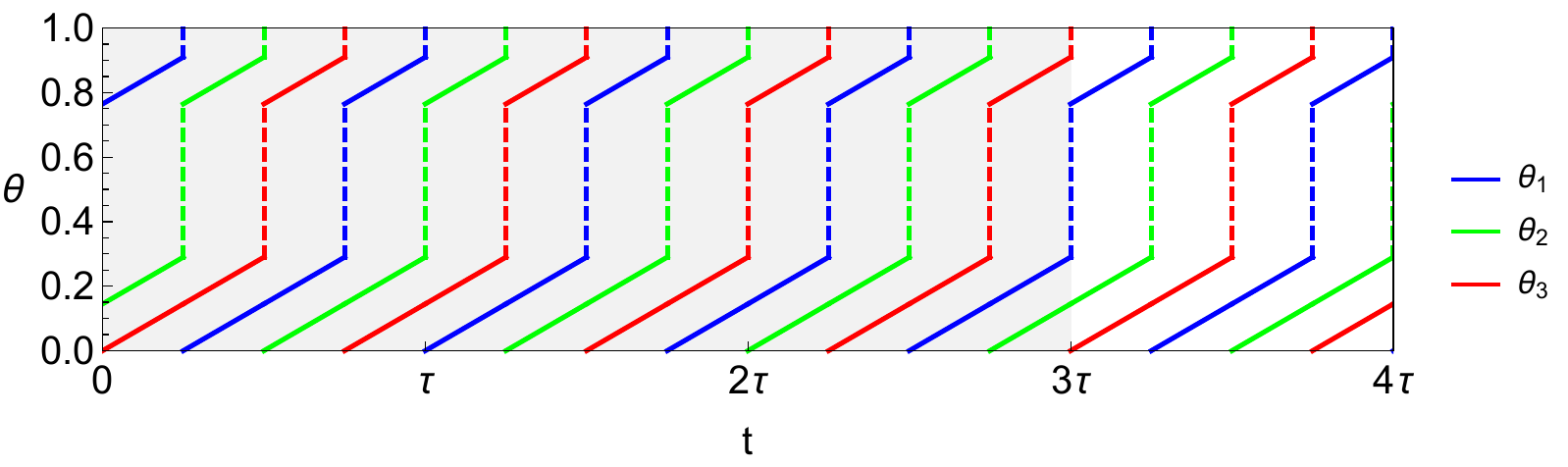}}
  \caption{Evolution of the phases of the $3$ oscillators for different orbits in the isochronous region IR4. The shaded region represents one period of the generic orbit, that is, $T=3\tau$; dashed vertical lines represent phase jumps induced by the reception of pulses. From top to bottom: (a) Generic orbit, $T = 3\tau$ and $T_P = 4$; (b) $\bm{\sigma} = (\tau/2,\sigma_2,\tau/2+\sigma_2)$, $\sigma_2 \ne \tau/4$, giving $T = 3\tau/2$ and $T_P = 2$; (c) $\bm{\sigma} = \bm{\sigma}_* = (\tau/2,\tau/4,3\tau/4)$, giving $T = 3\tau/4$ and $T_P = 1$.}
\end{figure}

\begin{table*}[tbp]
\centering
\begin{tabular}{llll}
\toprule
time       & $O_1[\{\sigma_1 \}, \theta_1$]       & $O_2[\{\sigma_2 \}, \theta_2$]        & $O_3[\{\sigma_3 \}, \theta_3$]   \\
\colrule
$-\sigma_3$      & $[- , -]$       & $[-, -]$       & $[-, F]$      \\
$-\sigma_1$   & $[- , F]$  & $[-, -]$           & $[-, -]$     \\
$-\sigma_2$           & $[-, -]$       & $[-, F]$ & $[-, -]$       \\
0   &  $[\sigma_1, H(\sigma_1)]$       & $[\sigma_2, \sigma_2]$      &$ [\sigma_3, 0]$      \\
\colrule
$\tau-\sigma_3$      & $[\tau-\sigma_3+\sigma_1, 0]$       & $[\tau-\sigma_3+\sigma_2, H(\tau-\sigma_3+\sigma_2)]$       & $[\tau-\sigma_3, \tau-\sigma_3]$      \\
$\tau-\sigma_1$      & $[\sigma_3-\sigma_1, \sigma_3-\sigma_1]$       & $[\tau-\sigma_1+\sigma_2, 0]$       & $[\tau-\sigma_1, H(\tau-\sigma_1)]$      \\
$\tau-\sigma_2$      & $[\sigma_3-\sigma_2, H(\sigma_3-\sigma_2)]$   & $[\sigma_1-\sigma_2, \sigma_1-\sigma_2]$   & $[\tau-\sigma_2, 0]$      \\
\colrule
$\tau$      & $[ \sigma_3, 0]$       & $[\sigma_1, H(\sigma_1)]$       & $[\sigma_2, \sigma_2]$      \\
$2\tau-\sigma_3$      & $[\tau-\sigma_3, \tau-\sigma_3]$       & $[\tau-\sigma_3+\sigma_1, 0]$       & $[\tau-\sigma_3+\sigma_2, H(\tau-\sigma_3+\sigma_2)]$      \\
$2\tau-\sigma_1$      & $[\tau-\sigma_1, H(\tau-\sigma_1)]$       & $[\sigma_3-\sigma_1, \sigma_3-\sigma_1]$       & $[\tau-\sigma_1+\sigma_2, 0]$      \\
\colrule
$2\tau-\sigma_2$      & $[\tau-\sigma_2, 0]$     & $[\sigma_3-\sigma_2, H(\sigma_3-\sigma_2)]$   & $[\sigma_1-\sigma_2, \sigma_1-\sigma_2]$          \\
$2\tau$      & $[\sigma_2, \sigma_2]$     & $[ \sigma_3, 0]$       & $[\sigma_1, H(\sigma_1)]$         \\
$3\tau-\sigma_3$      & $[\tau-\sigma_3+\sigma_2, H(\tau-\sigma_3+\sigma_2)]$     & $[\tau-\sigma_3, \tau-\sigma_3]$       & $[\tau-\sigma_3+\sigma_1, 0]$         \\
\colrule
$3\tau-\sigma_1$      & $[\tau-\sigma_1+\sigma_2, 0]$     & $[\tau-\sigma_1, H(\tau-\sigma_1)]$       & $[\sigma_3-\sigma_1, \sigma_3-\sigma_1]$         \\
$3\tau-\sigma_2$      & $[\sigma_1-\sigma_2, \sigma_1-\sigma_2]$      & $[\tau-\sigma_2, 0]$     & $[\sigma_3-\sigma_2, H(\sigma_3-\sigma_2)]$       \\
$3\tau$      & $[\sigma_1, H(\sigma_1)]$       & $[\sigma_2, \sigma_2]$     & $[ \sigma_3, 0]$          \\
\botrule
\end{tabular}
\caption{Dynamics for a periodic orbit in the isochronous region IR4. The periodic orbits in this isochronous region have Poincar\'e period $T_P = 4$ and period $T = 3\tau$ in the full phase space. In the time interval $(-\tau,0)$, the only useful information is the firing moments, so we use `$-$' in the table to represent the ``useless'' information and `$F$' to represent an oscillator fired at the given moment.}
\label{tbl/pulse-class-A}
\end{table*}

\subsection{Existence}

\begin{figure}[tp]
  \subfloat[\label{p4por-A}]%
  {\includegraphics[width=0.48\linewidth]{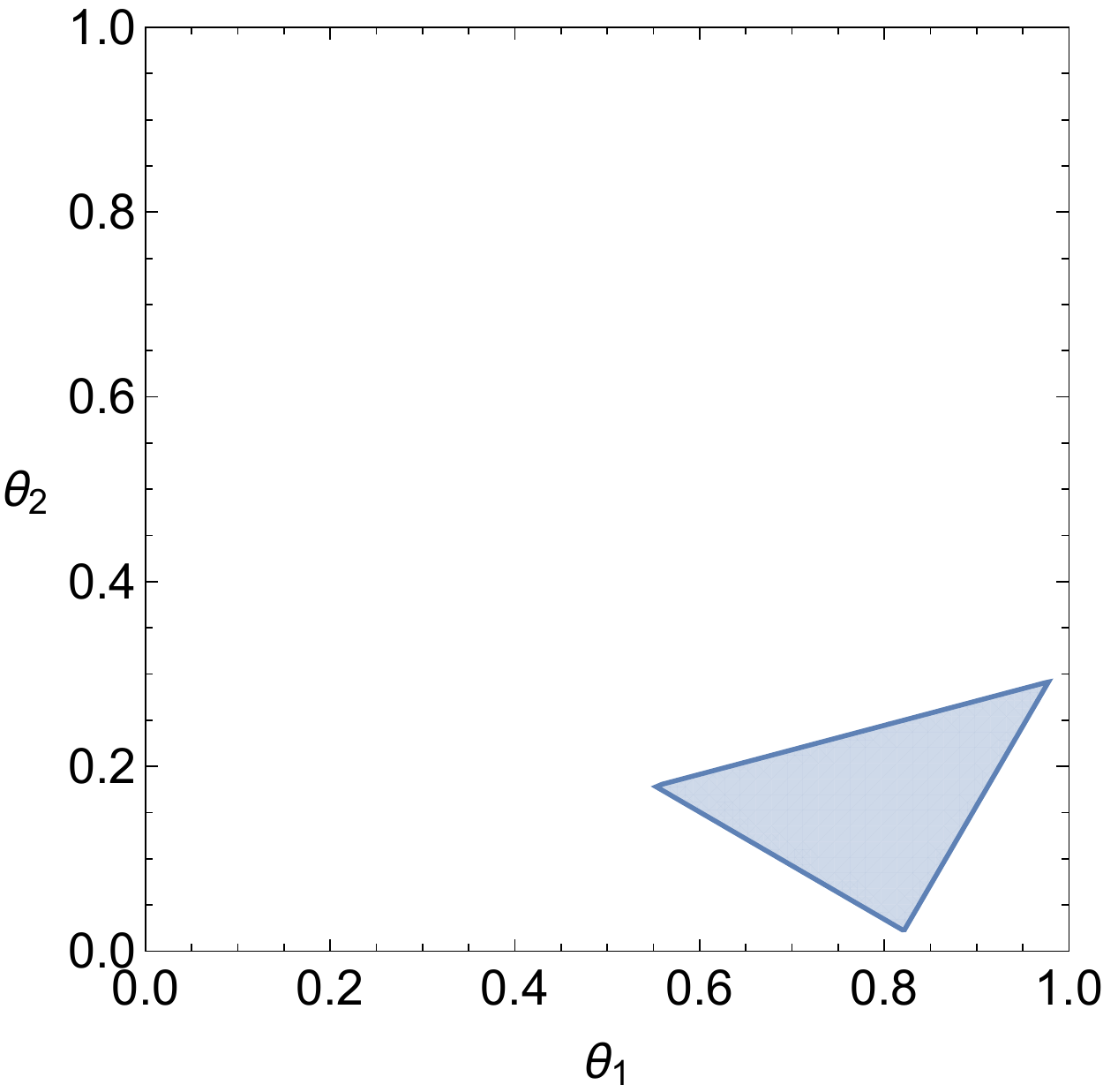}}
  \hfill
  \subfloat[\label{p4por-Omega}]%
  {\includegraphics[width=0.48\linewidth]{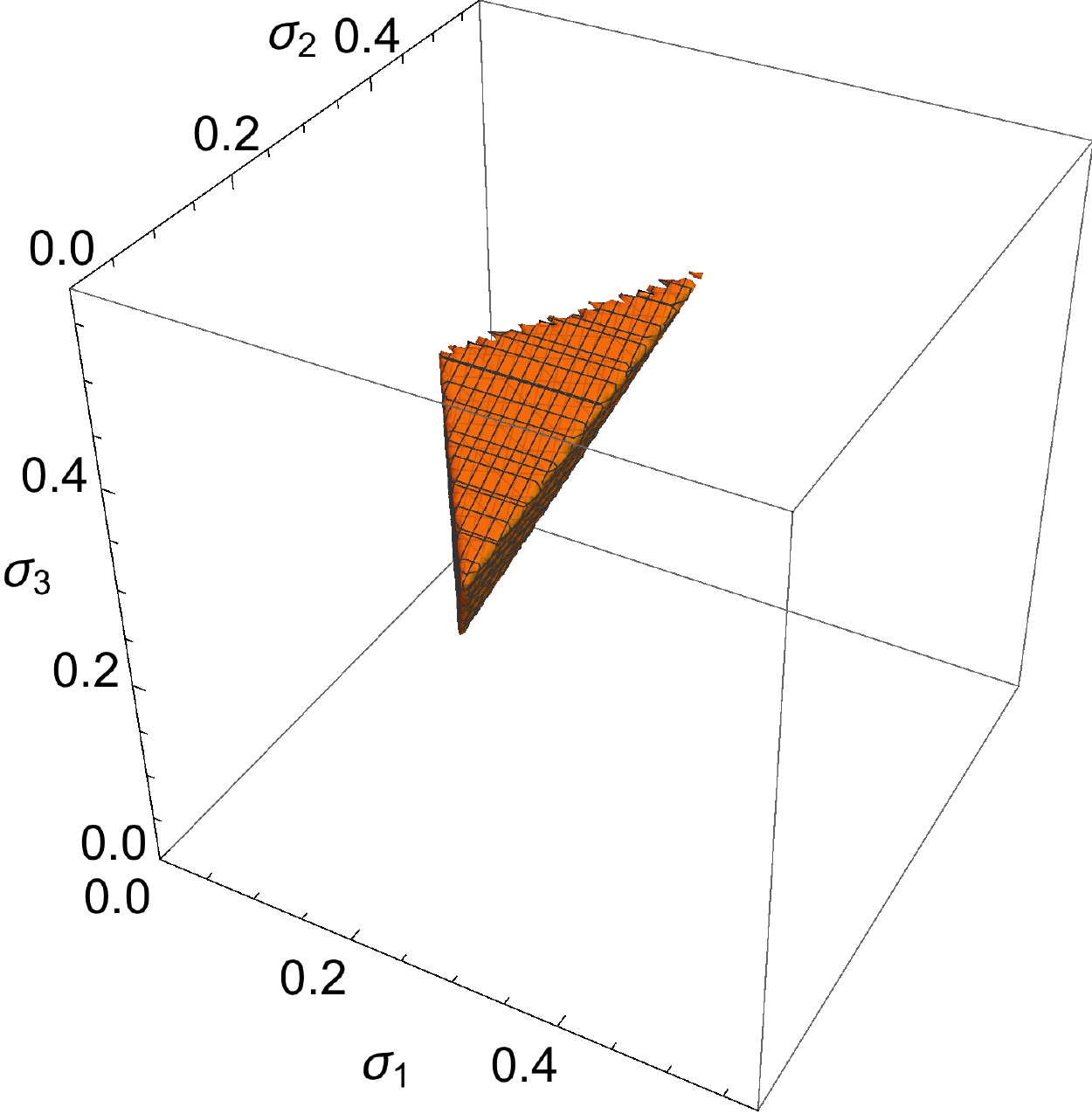}}
  \\
  \subfloat[\label{fig/existOmega}]%
  {\includegraphics[width=0.48\linewidth]{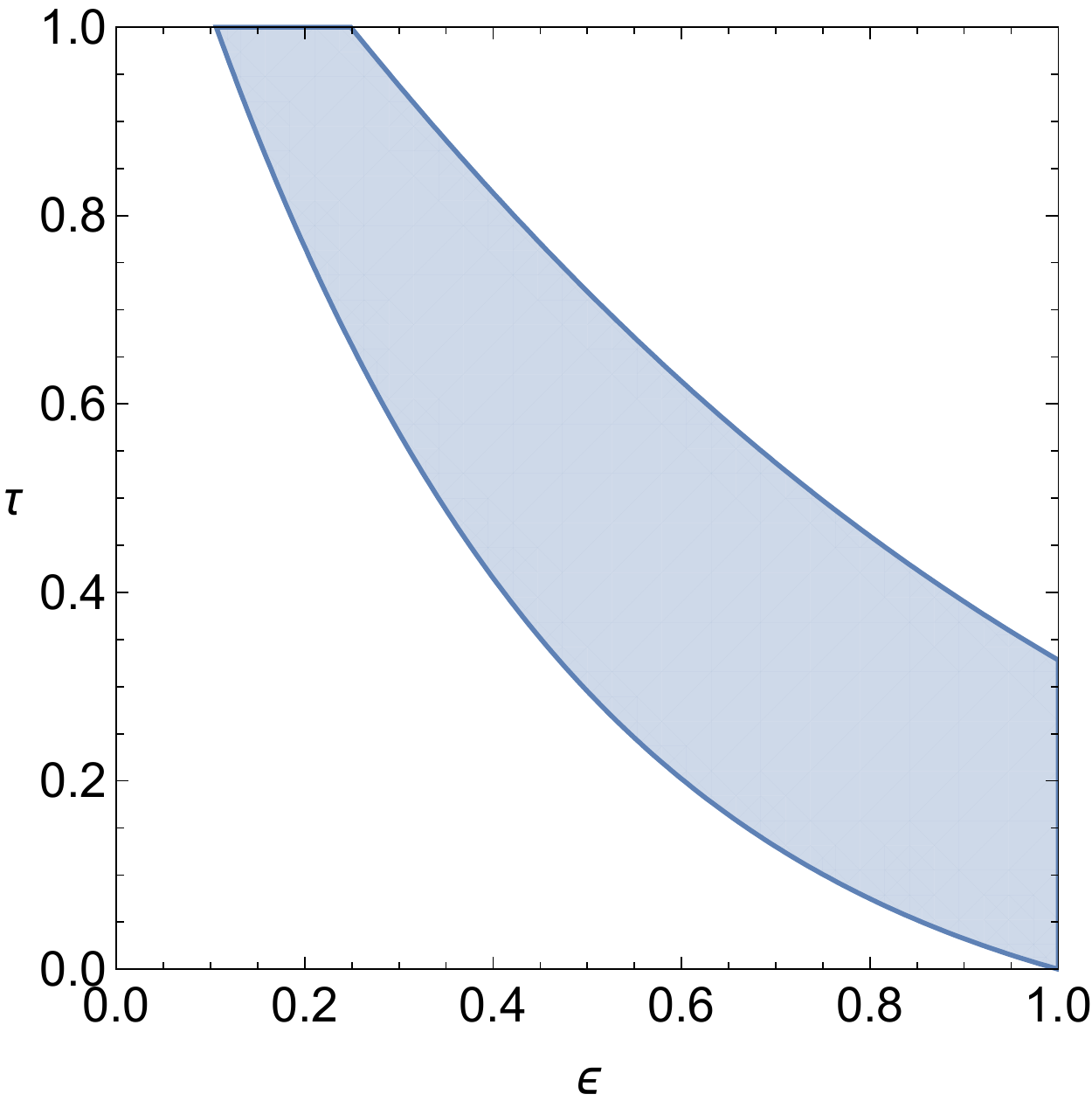}}
  \hfill
  \subfloat[\label{fig/volOmega}]%
  {\includegraphics[width=0.48\linewidth]{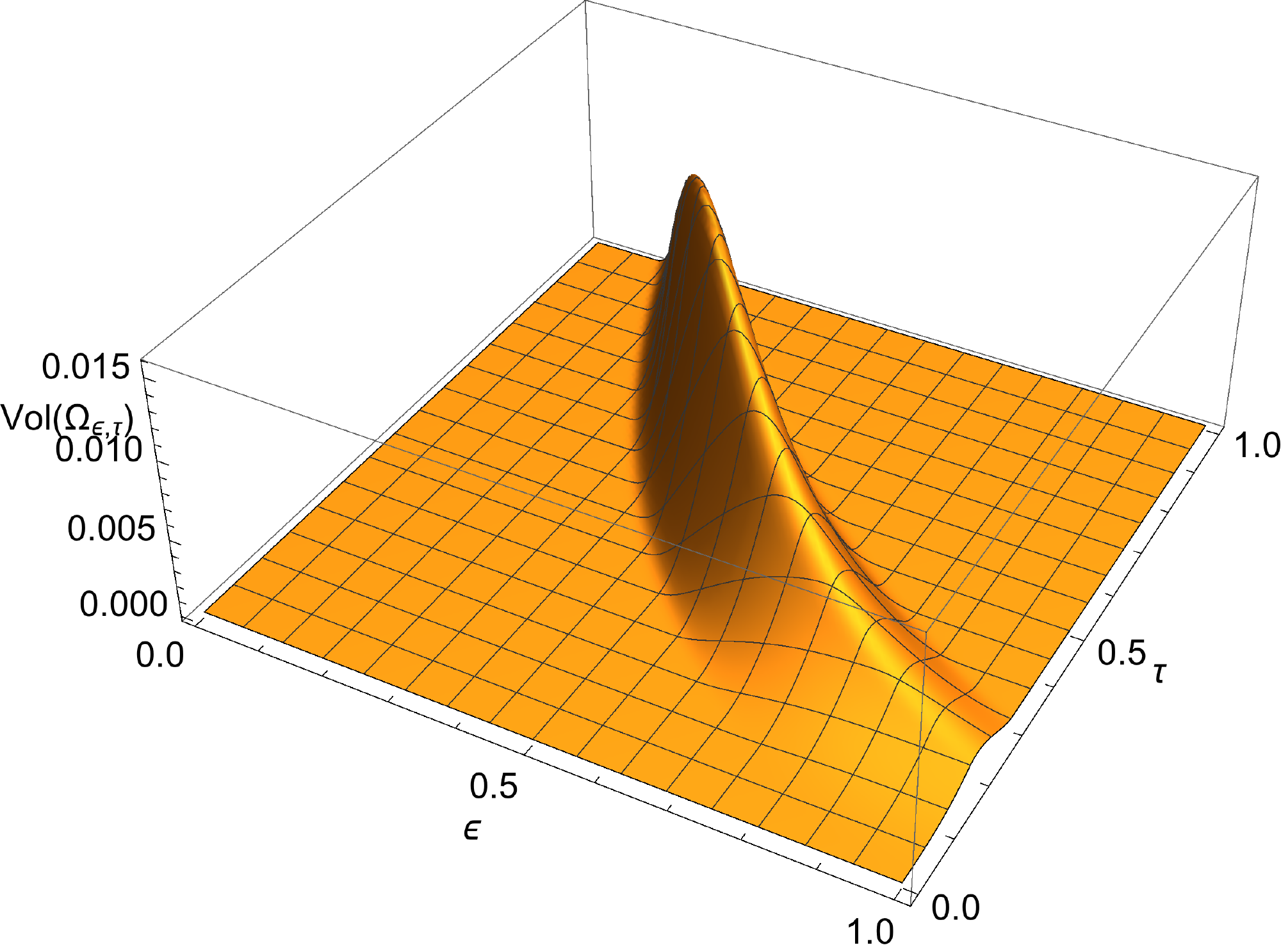}}%
  \caption{Sets $\mathcal{A}_{\varepsilon,\tau}$ and $\Omega_{\varepsilon,\tau}$ for IR4. (a) $\mathcal{A}_{\varepsilon,\tau}$; cf.~\autoref{fig/all-periodic-points}, for $(\varepsilon,\tau)=(0.58,0.58)$. (b) $\Omega_{\varepsilon,\tau}$ for $(\varepsilon,\tau)=(0.58,0.58)$. (c) The shaded region represents the subset of parameter space $(\varepsilon,\tau)$, given by Eq.~\eqref{eq/master-ineq}, for which $\Omega_{\epsilon,\tau}$ is non-empty. (d) Volume of $\Omega_{\varepsilon,\tau}$.}
\end{figure}

Let $\wOmega_{\varepsilon,\tau} = S(\Omega_{\varepsilon,\tau} )$ be the embedding of $\Omega_{\varepsilon,\tau}$ in the $(\bm{\theta};\bm{\sigma})$-space. Moreover, let $\mathcal{A}_{\varepsilon,\tau} = \mathrm{pr}_\theta(\wOmega_{\varepsilon,\tau})$, where $\mathrm{pr}_\theta: \mathbb R^6 \to \mathbb R^2$ is the projection to the $(\theta_1,\theta_2)$-plane. The set $\mathcal{A}_{\varepsilon,\tau}$ is depicted in \autoref{p4por-A} for $(\varepsilon,\tau)=(0.58,0.58)$. One can check that $\Omega_{\varepsilon,\tau}$ and $\mathcal{A}_{\varepsilon,\tau}$ have non-empty interior for $(\varepsilon, \tau) = (0.58, 0.58)$ and that each point in $\wOmega_{\varepsilon,\tau}$ is the initial condition of a periodic orbit in IR4 of \autoref{tbl/pulse-class-A} with period $T = 3\tau$ and Poincar\'e period $T_P = 4$ . Therefore, $\wOmega_{\varepsilon,\tau}$ is a periodic plateau.

\begin{prop}
  The isochronous region IR4 of \autoref{tbl/pulse-class-A} exists in the subset of the parameter space $(\varepsilon, \tau)$ given by
  \begin{align} \label{eq/master-ineq}
    H_* \le H\left( \frac{\tau}{2} \right) + \frac{\tau}{4} \le 1,
  \end{align}
  see \autoref{fig/existOmega}.
\end{prop}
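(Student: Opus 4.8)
The plan is to show that the parameter condition \eqref{eq/master-ineq} is precisely the statement that the fixed point $\bm{\sigma}_* = (\tau/2,\tau/4,3\tau/4)$ of \autoref{prop/dynamics-3} belongs to $\Omega_{\varepsilon,\tau}$, and then to argue that $\Omega_{\varepsilon,\tau}$ is non-empty (with non-empty interior, hence a genuine isochronous region) if and only if $\bm{\sigma}_* \in \Omega_{\varepsilon,\tau}$. The entire argument rests on one structural observation: the four constraint functions $F_1,\dots,F_4$ in \eqref{eq/def-F} are not independent, but are the single function $F_1$ evaluated at successive iterates of $g$.

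For sufficiency I would substitute $\bm{\sigma}_*$ into \eqref{eq/def-F}. A direct computation gives $F_k(\bm{\sigma}_*) = H(\tau/2) + \tau/4$ for every $k=1,2,3,4$; the four values coincide precisely because $\bm{\sigma}_*$ is a fixed point of $g$, and the ordering $0 < \tau/4 < \tau/2 < 3\tau/4 < \tau$ holds for every $\tau>0$. Thus the requirement $H_* \le F_k(\bm{\sigma}_*) \le 1$ for all $k$ is exactly \eqref{eq/master-ineq}, so the inequality guarantees $\bm{\sigma}_* \in \Omega_{\varepsilon,\tau}$. When the inequalities in \eqref{eq/master-ineq} are strict, continuity of the $F_k$ places an entire neighbourhood of $\bm{\sigma}_*$ inside $\Omega_{\varepsilon,\tau}$, so the region has non-empty interior and is a genuine isochronous region.

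The necessity direction is the heart of the proof. First I would establish the orbit relations $F_1(g(\bm{\sigma})) = F_4(\bm{\sigma})$, $F_1(g^2(\bm{\sigma})) = F_3(\bm{\sigma})$, and $F_1(g^3(\bm{\sigma})) = F_2(\bm{\sigma})$ by inserting the explicit iterates of $g$ from \eqref{eq/pmap-g-sigma} into the definition of $F_1$; consequently the multiset $\{F_1(g^j(\bm{\sigma}))\}_{j=0}^{3}$ equals $\{F_k(\bm{\sigma})\}_{k=1}^{4}$. Next I would note that $F_1(\bm{\sigma}) = H(\sigma_1)+\tau-\sigma_3$ is an \emph{affine} function of $\bm{\sigma}$, since $H(\theta)$ in \eqref{eq/H-func} is affine in $\theta$. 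Finally, writing $g$ in shifted coordinates $\bm{s} = \bm{\sigma}-\bm{\sigma}_*$ as the linear map $\bm{s}\mapsto L\bm{s}$ of \autoref{prop/dynamics-3}, I would use that $L^4 = I$ while $1$ is not an eigenvalue of $L$ (since $\bm{s}=0$ is its only fixed point) to conclude $I+L+L^2+L^3 = 0$; equivalently, the barycenter $\tfrac14\sum_{j=0}^{3} g^j(\bm{\sigma})$ of every $g$-orbit equals $\bm{\sigma}_*$. Averaging the affine function $F_1$ over the orbit therefore yields $\tfrac14\sum_{k=1}^{4} F_k(\bm{\sigma}) = F_1(\bm{\sigma}_*) = H(\tau/2)+\tau/4$ exactly. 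Hence if $\Omega_{\varepsilon,\tau}$ is non-empty, picking any $\bm{\sigma}$ in it forces all four $F_k(\bm{\sigma})$ to lie in $[H_*,1]$, so their average $H(\tau/2)+\tau/4$ lies in $[H_*,1]$ as well, which is \eqref{eq/master-ineq}.

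The main obstacle is establishing the orbit relations that express each $F_k$ as $F_1$ evaluated at an iterate of $g$: this is a finite but bookkeeping-heavy computation, and the indices must be tracked carefully against the cyclic relabelling of oscillators implicit in $g$. Once these relations are in hand, the affine-barycenter argument collapses the four constraints into the single scalar condition \eqref{eq/master-ineq}, and both directions close immediately. It is worth emphasising that the affinity of $F_1$—a direct consequence of the exponential choice of $f_b$—is exactly what makes the averaging step an equality rather than a one-sided estimate, and this is why \eqref{eq/master-ineq} is sharp in both directions.
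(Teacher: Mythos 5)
Your proof is correct and follows essentially the same route as the paper's: both arguments hinge on the identity $\tfrac14\sum_{k=1}^4 F_k(\bm{\sigma};\tau) = H(\tau/2)+\tau/4$ for necessity and on checking that the fixed point $\bm{\sigma}_* = (\tau/2,\tau/4,3\tau/4)$ saturates all four constraints simultaneously for sufficiency, with an open neighbourhood argument under strict inequality. The only difference is cosmetic: the paper asserts the averaging identity by direct computation, whereas you derive it conceptually from the orbit relations $F_{k}= F_1\circ g^{j}$ together with the affinity of $H$ and the fact that every $g$-orbit has barycenter $\bm{\sigma}_*$ (via $I+L+L^2+L^3=0$), which is a nice explanation of why the identity holds but does not change the structure of the proof.
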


\begin{proof}
Note that
\begin{align*}
  \frac14 \sum_{k=1}^4 F_k(\bm{\sigma};\tau) = H\left(\frac{\tau}{2}\right) + \frac{\tau}{4}.
\end{align*}
This implies that Eq.~\eqref{eq/master-ineq} is a necessary condition for Eq.~\eqref{eq/ineq-pulse-class-A} to hold. We show that if Eq.~\eqref{eq/master-ineq} holds then $\Omega_{\varepsilon,\tau}$ contains a non-empty open subset. Consider the point
\begin{align*}
  \bm{\sigma}_* = \left(\frac{\tau}{2}, \frac{\tau}{4}, \frac{3\tau}{4} \right).
\end{align*}
Then $\bm{\sigma}_* \in \Omega_{\varepsilon,\tau}$ if and only if Eq.~\eqref{eq/master-ineq} holds, since in this case we have $F_k(\bm{\sigma}_*;\tau) = H(\tau/2) + \tau/4$, for $k=1,\dots,4$. Therefore, when Eq.~\eqref{eq/master-ineq} holds, $\Omega_{\varepsilon,\tau} \ne \emptyset$. Moreover, when the strict form of Eq.~\eqref{eq/master-ineq} holds, there is an open neighborhood $U$ of $D_\tau$ in $\bm{\sigma}$-space such that $U \subset \Omega_{\varepsilon,\tau}$.
\end{proof}

\begin{figure}[htbp]
  \centering
  \includegraphics[width=\linewidth]{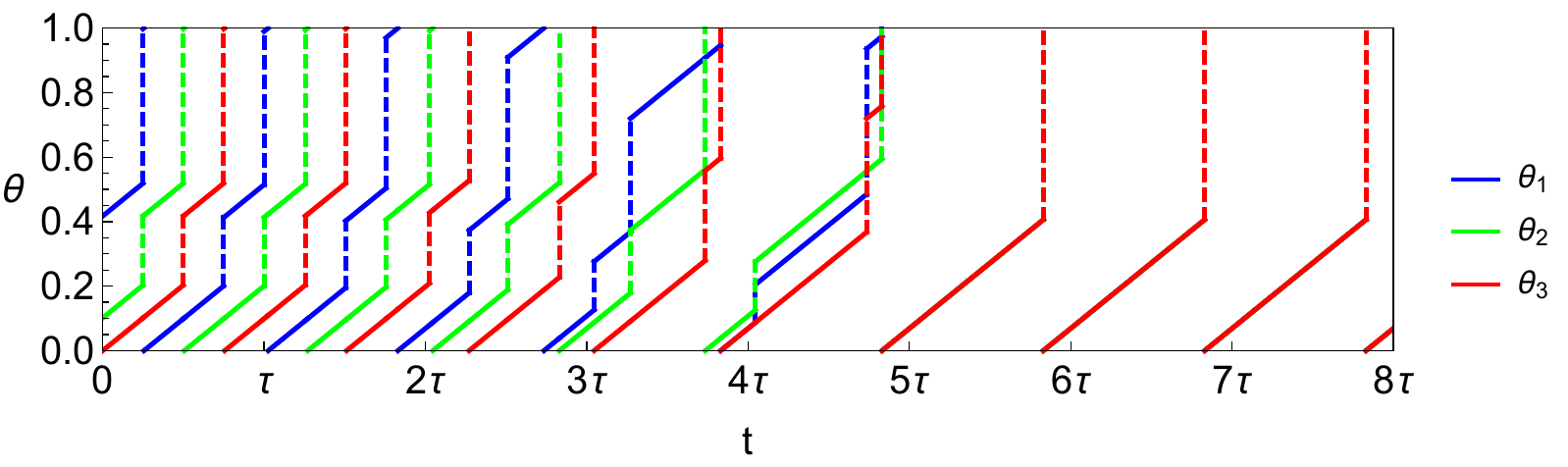}
  \caption{Evolution of the phases of the 3 oscillators with $\bm{\sigma} = \bm{\sigma}_* = (\tau/2,\tau/4,3\tau/4)$ outside the region.}
  \label{fig/p1out}
\end{figure}

\autoref{fig/volOmega} shows the volume of $\Omega_{\varepsilon,\tau}$ for $(\varepsilon,\tau) \in [0,1]^2$. The volume is computed using the Mathematica function \texttt{Volume}.

\begin{remark}
  If $H(\frac{\tau}{2})+\frac{\tau}{4} = H_*$ or $H(\frac{\tau}{2})+\frac{\tau}{4}=1$, the inequalities \eqref{eq/ineq-pulse-class-A} are satisfied only by the point $\bm{\sigma}_*$. \autoref{fig/p1out} shows the phases for an orbit starting from the point $\bm{\sigma}_*$ when $(\varepsilon,\tau)$ moves outside the region of existence of IR4. In that case the dynamics converges in short time to a stable periodic orbit with $T_P=1$.
\end{remark}

\begin{figure}[htbp]
  \centering
  \subfloat[$(\varepsilon,\tau) = (0.58,0.58)$.\label{fig/comp058}]%
  {\includegraphics[width=0.48\linewidth]{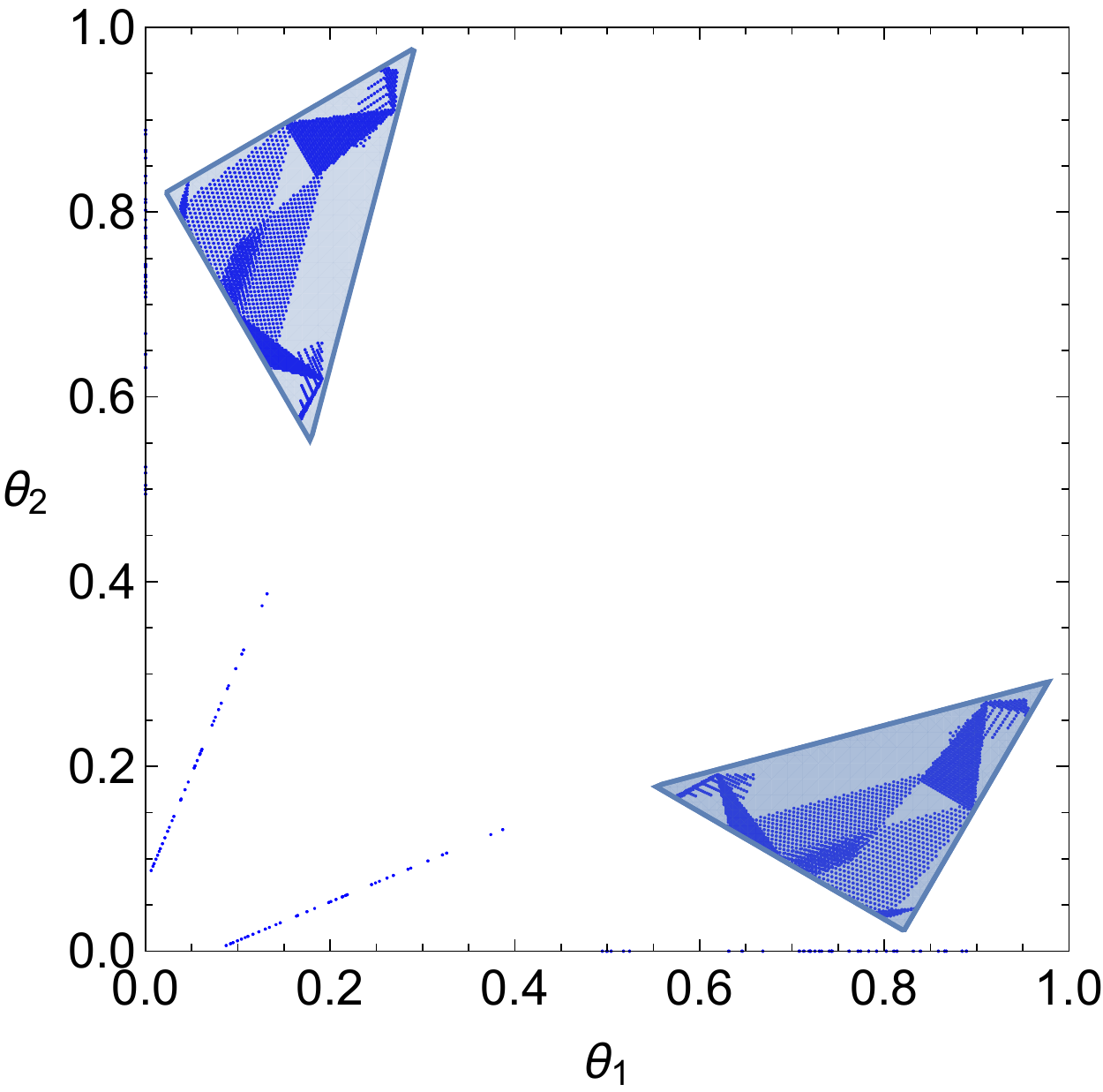}}
  \hfill
  \subfloat[$(\varepsilon,\tau) = (0.45,0.45)$.\label{fig/comp045}]%
  {\includegraphics[width=0.48\linewidth]{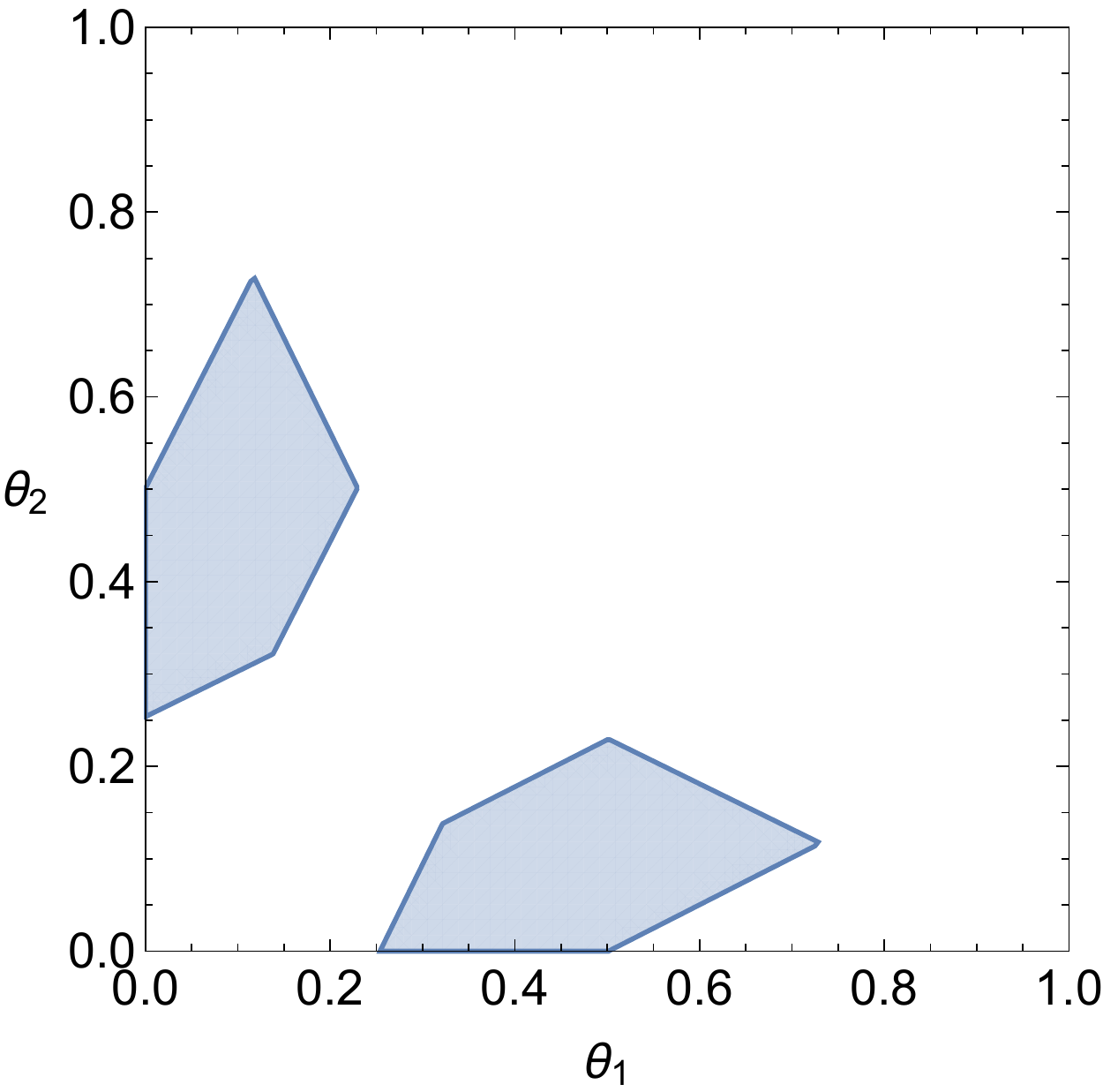}}
  \caption{Comparison between the analytically obtained $\mathcal A_{\varepsilon,\tau}$ and the numerically computed orbits in IR4 in our initial numerical computations.}
\end{figure}

\autoref{fig/comp058} compares the analytically obtained $\mathcal A_{\varepsilon,\tau}$ for $(\varepsilon,\tau) = (0.58,0.58)$ to the numerical results discussed in \autoref{sec/numexp}. We note that the numerically obtained orbits cover only part of $\mathcal A_{\varepsilon,\tau}$. This can be explained by the fact that the space of initial conditions that we scanned in our numerical experiments does not include the periodic orbits in IR4. Some of the orbits in IR4 are periodic attractors for our numerical initial conditions but others are not accessible. This effect is much more pronounced for $(\varepsilon,\tau) = (0.45,0.45)$ as is shown in \autoref{fig/comp045}. In this case our initial numerical experiments did not reveal the existence of any orbits in IR4. Nevertheless, in this case $\Omega_{\varepsilon,\tau}$ is non-empty and subsequent numerical experiments with different initial conditions allowed us to numerically find orbits in IR4.

\subsection{Stability}
\label{sec/stability}

In this section we consider the stability of the periodic orbits in IR4. We show that, for $\bm{\sigma} \in \Omega_{\varepsilon,\tau}$, small changes in $\bm{\theta}$ lead to the same periodic orbit while small changes in $\bm{\sigma}$ lead to a nearby periodic orbit in the same pulse equivalence class. In particular, we have the following result.

\begin{prop}\label{localbasin}
  Let $(\bm{\theta},\bm{\sigma}) = S(\bm\sigma)$, $\bm{\sigma} \in \Omega_{\varepsilon,\tau}$, and denote by $Y(\bm{\sigma})$ the corresponding periodic orbit in IR4. Then for $\Delta\bm{\theta}$ and $\Delta\bm{\sigma}$ sufficiently small, the orbit with initial condition $(\bm{\theta}+\Delta\bm{\theta},\bm{\sigma}+\Delta\bm{\sigma})$ converges in one iteration of the Poincar\'e map to the periodic orbit $Y(\bm{\sigma}+\Delta\bm{\sigma})$.
\end{prop}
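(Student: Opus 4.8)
The plan is to exploit the key structural feature of the Poincar\'e map $G$ computed in Proposition~\ref{prop/dynamics-1}: its output phases $H(\sigma_3-\sigma_2)$ and $\sigma_1-\sigma_2$ are functions of the firing time distances $\bm\sigma$ alone and are completely independent of the incoming phases $(\theta_1,\theta_2)$. Thus, whenever the hypotheses of Proposition~\ref{prop/dynamics-1} hold, a single application of $G$ erases all information about $\bm\theta$ and resets the phase coordinates to values dictated purely by $\bm\sigma$. This phase-forgetting property is the mechanism responsible for the claimed one-step convergence, and it immediately explains the two distinct roles of $\Delta\bm\theta$ and $\Delta\bm\sigma$: the former is annihilated while the latter selects a neighbouring orbit.

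First I would establish that the perturbed initial condition $(\bm\theta+\Delta\bm\theta,\bm\sigma+\Delta\bm\sigma)$ still satisfies the hypotheses (a)--(e) of Proposition~\ref{prop/dynamics-1}, taking $\bm\sigma$ in the interior of $\Omega_{\varepsilon,\tau}$ so that $\bm\sigma+\Delta\bm\sigma\in\Omega_{\varepsilon,\tau}$ for small $\Delta\bm\sigma$ and $Y(\bm\sigma+\Delta\bm\sigma)$ is well defined. At the unperturbed point $S(\bm\sigma)$, conditions (a)--(e) reduce to the strict forms of \eqref{eq/ineq-pulse-class-A}, namely $0<\sigma_2<\sigma_1<\sigma_3<\tau$ and $H_*<F_k(\bm\sigma;\tau)<1$, which hold strictly on the interior. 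Since (b) and (c) carry the incoming phases, I would check them with the perturbed values $\theta_1=H(\sigma_1)+\Delta\theta_1$ and $\theta_2=\sigma_2+\Delta\theta_2$; as these are all open conditions depending continuously on $(\Delta\bm\theta,\Delta\bm\sigma)$, they persist for sufficiently small perturbations. I would likewise note that the auxiliary orderings $0<\theta_2<\theta_1<1$ used in the proof of Proposition~\ref{prop/dynamics-1} hold strictly at $S(\bm\sigma)$ (since $\theta_1=H(\sigma_1)\le 1-(\tau-\sigma_3)<1$ and $\theta_1>\sigma_1>\sigma_2=\theta_2$) and therefore survive perturbation.

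Granting this, the interleaved pulse/fire event sequence is combinatorially identical to the one in Proposition~\ref{prop/dynamics-1}, so the Poincar\'e map on the perturbed point is still given by \eqref{eq/pmap-1}. Applying it and invoking the phase-forgetting property, the image is
\begin{align*}
  G(\bm\theta+\Delta\bm\theta,\bm\sigma+\Delta\bm\sigma) = S\bigl(g(\bm\sigma+\Delta\bm\sigma)\bigr),
\end{align*}
because the output phases $H(\sigma_3'-\sigma_2')$ and $\sigma_1'-\sigma_2'$, with $\bm\sigma'=\bm\sigma+\Delta\bm\sigma$, are exactly the phase coordinates that $S$ assigns to $g(\bm\sigma+\Delta\bm\sigma)$. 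By Proposition~\ref{prop/dynamics-2} this equals $G(S(\bm\sigma+\Delta\bm\sigma))$, the first Poincar\'e iterate of the initial point of $Y(\bm\sigma+\Delta\bm\sigma)$, and hence lies on $Y(\bm\sigma+\Delta\bm\sigma)$. This is exact convergence after a single iteration: setting $\Delta\bm\sigma=0$ recovers the original orbit $Y(\bm\sigma)$, confirming that $\bm\theta$-perturbations are irrelevant, while $\Delta\bm\sigma$ moves us to the neighbouring orbit in the same pulse-equivalence class.

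The main obstacle I anticipate lies in the first step: verifying that the perturbation triggers no discrete event collision. The derivation of \eqref{eq/pmap-1} rests on a fixed time-ordering of the pulse and fire events, and one must confirm that every strict inequality separating consecutive events remains strict after perturbing both $\bm\theta$ and $\bm\sigma$ --- not merely (a)--(e), but also the intermediate comparisons that appear implicitly in the event-sequence computation (for example those guaranteeing which oscillator fires first at each stage, and that oscillator $3$ does not fire prematurely). This is a transversality statement: it holds precisely because $S(\bm\sigma)$ sits in the open interior of $\wOmega_{\varepsilon,\tau}$, where all governing inequalities are strict, but making it rigorous requires enumerating the finite list of event comparisons and confirming each is an open condition.
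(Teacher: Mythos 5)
Your proposal is correct and follows essentially the same route as the paper: use the openness of $\Omega_{\varepsilon,\tau}$ and the continuity/openness of the hypotheses of Proposition~\ref{prop/dynamics-1} to show the perturbed initial condition still satisfies them, then observe that the resulting Poincar\'e image $S(g(\bm\sigma+\Delta\bm\sigma))$ depends only on $\bm\sigma+\Delta\bm\sigma$ and lies on $Y(\bm\sigma+\Delta\bm\sigma)$. Your explicit articulation of the phase-forgetting property of $G$ and of the need to preserve the event ordering is a more careful rendering of what the paper leaves implicit, but it is not a different argument.
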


\begin{proof}
  This statement is a straightforward consequence of Proposition \ref{prop/dynamics-1}. Since $\Omega_{\varepsilon,\tau}$ is open in $\mathbb R^3$, given $\bm{\sigma} \in \Omega_{\varepsilon,\tau}$, there is an open neighborhood $U \ni \bm{\sigma}$ with $U \subseteq \Omega_{\varepsilon,\tau}$. Therefore, for $\Delta\bm{\sigma}$ small enough we have $\bm{\sigma} + \Delta \bm{\sigma} \in \Omega_{\varepsilon,\tau}$. Therefore, $(\bm{\theta}',\bm{\sigma}+\Delta\bm{\sigma}) = S(\bm{\sigma} + \Delta \bm{\sigma})$ satisfies the conditions of Proposition \ref{prop/dynamics-1}. This implies that for sufficiently small $\Delta\bm{\theta}$ we have that $(\bm{\theta}'+\Delta\bm{\theta}',\bm{\sigma}+\Delta\bm{\sigma})$ also satisfies the conditions of Proposition \ref{prop/dynamics-1} giving convergence to $Y(\bm{\sigma}+\Delta\bm{\sigma})$. Finally, we note that $\Delta\bm{\theta}'$ can be made sufficiently small by making $\Delta\bm{\theta}$ sufficiently small because of the continuity of the map $S$.
\end{proof}

\section{Other isochronous regions}
\label{sec/other}

The isochronous region IR4 is not the only such region that appears in the system under consideration here. Here we briefly report on two other such regions.

\subsection{The isochronous region IR3}

The isochronous region IR3 consists of periodic orbits with Poincar\'e period $T_P=3$. For orbits in IR3, two of the oscillators have the same phase. This implies that the projection of orbits in IR3 to the $(\theta_1,\theta_2)$-plane lies either on one of the axes or along the diagonal.
Let $\Omega_{\varepsilon,\tau}$ be the subset of the $(\sigma_1,\sigma_3)$-space defined by the relations
\begin{subequations}
  \begin{align}\label{eq/ineq-pulse-class-B}
    \begin{aligned}
      & 0 < \sigma_1 < \sigma_3 < \tau, \\
      & H_* \le F_k(\bm{\sigma}; \tau) \le 1, \ \text{$k=1,2,3$}, \\
      & H_{**} \le F_k(\bm{\sigma}; \tau) \le 1, \ \text{$k=4,5,6$},
    \end{aligned}
  \end{align}
  where
  \begin{align}\label{eq/def-F-B}
    \begin{aligned}
      F_1(\bm{\sigma};\tau) &:= H(\sigma_2)+\tau-\sigma, \\
      F_2(\bm{\sigma};\tau) &:= H(\tau-\sigma_3)+\sigma_3-\sigma_2, \\
      F_3(\bm{\sigma};\tau) &:= H(\sigma_3-\sigma_2)+\sigma_2, \\
      F_4(\bm{\sigma};\tau) &:= \sigma_3, \\
      F_5(\bm{\sigma};\tau) &:= \tau-\sigma_2, \\
      F_6(\bm{\sigma};\tau) &:= \tau+\sigma_2-\sigma_3,
    \end{aligned}
  \end{align}
\end{subequations}
and
\begin{align*}
  H_{**} &= H_2^{-1}(1) = \frac{e^{b}-e^{2b\hat{\varepsilon}}}{(e^{b}-1)e^{2b\hat{\varepsilon}}}.
\end{align*}
Then we consider in state space the set $S(\Omega_{\varepsilon,\tau})$ where $S$ is given by
\begin{align}
\begin{aligned}
  S&: (\sigma_1, \sigma_3)
  \mapsto  (\theta_1, \theta_2, \theta_3; \{\sigma_1\}, \{\sigma_2\}, \{\sigma_3\}) \\
  &= (\sigma_1, \sigma_1, 0; \{\sigma_1\}, \{\sigma_1\}, \{\sigma_3\}).
\end{aligned}
\end{align}
The region $\Omega_{\varepsilon,\tau}$ and the projection $\mathcal A_{\varepsilon,\tau}$ of $S(\Omega_{\varepsilon,\tau})$ on the $(\theta_1,\theta_2)$-plane are shown in \autoref{fig/ir3-sigma-theta}.

\begin{figure}[tp]
  \centering 
  \subfloat[$\Omega_{\varepsilon,\tau}$.\label{fig/ir3-sigma}]%
  {\includegraphics[width=0.48\linewidth]{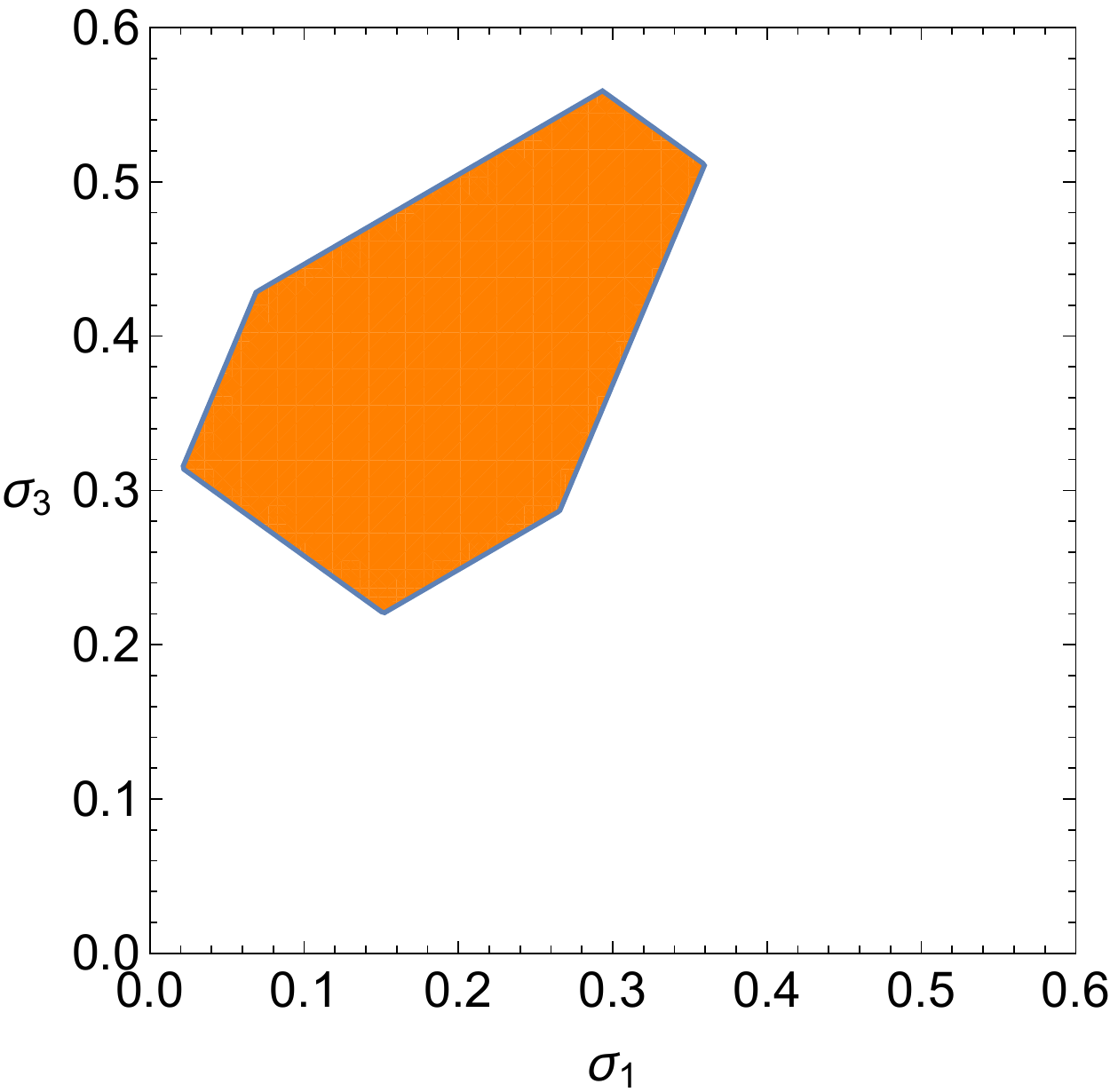}}%
  \hfill%
  \subfloat[$\mathcal A_{\varepsilon,\tau}$.\label{fig/ir3-theta}]%
  {\includegraphics[width=0.48\linewidth]{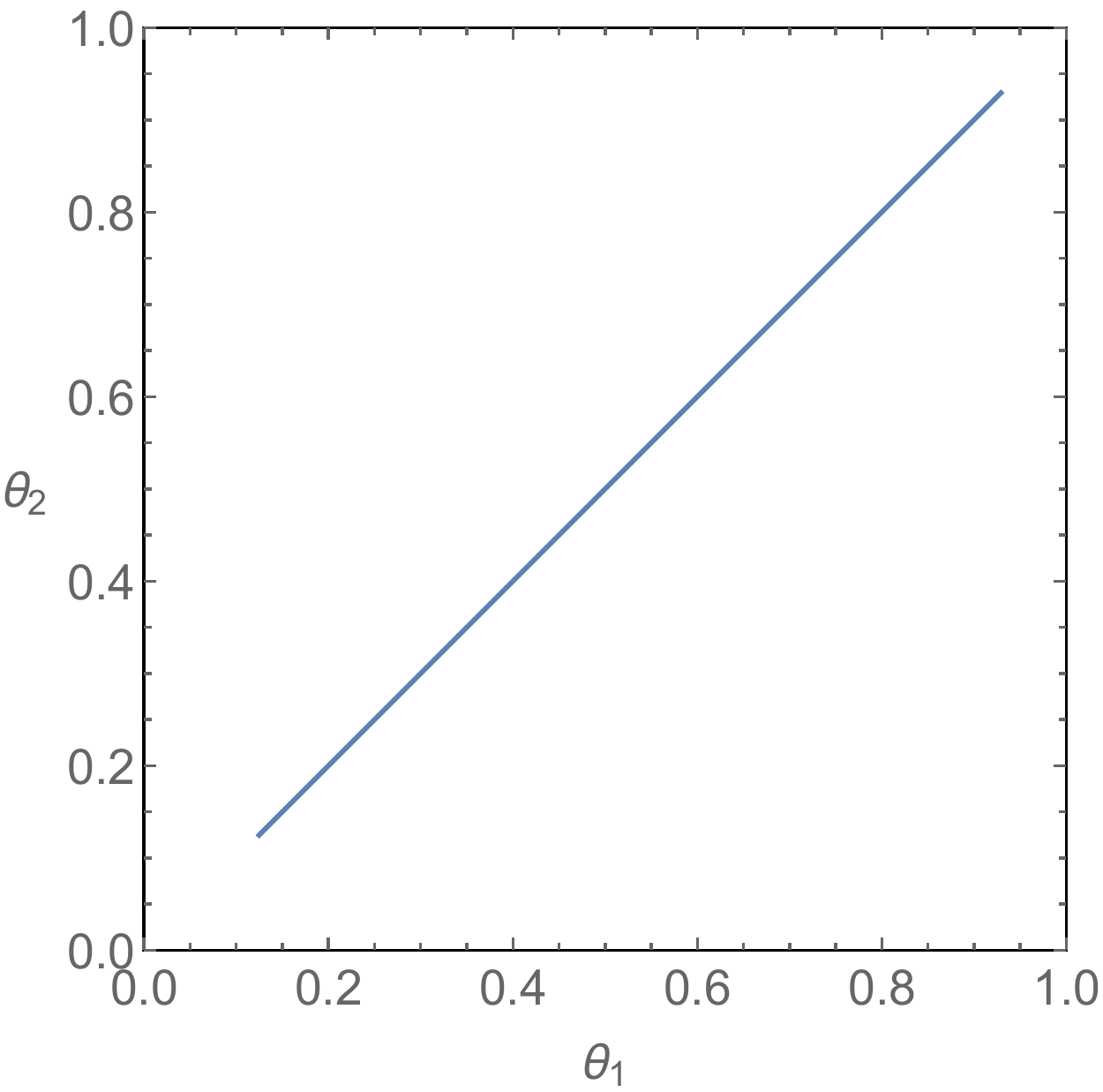}}%
  \caption{The sets $\Omega_{\varepsilon,\tau}$ and $\mathcal A_{\varepsilon,\tau}$ for IR3 and parameter values $(\varepsilon,\tau)=(0.58,0.58)$.}
  \label{fig/ir3-sigma-theta}
\end{figure}

\begin{figure}[tp]
  \centering
  \includegraphics[width=\linewidth]{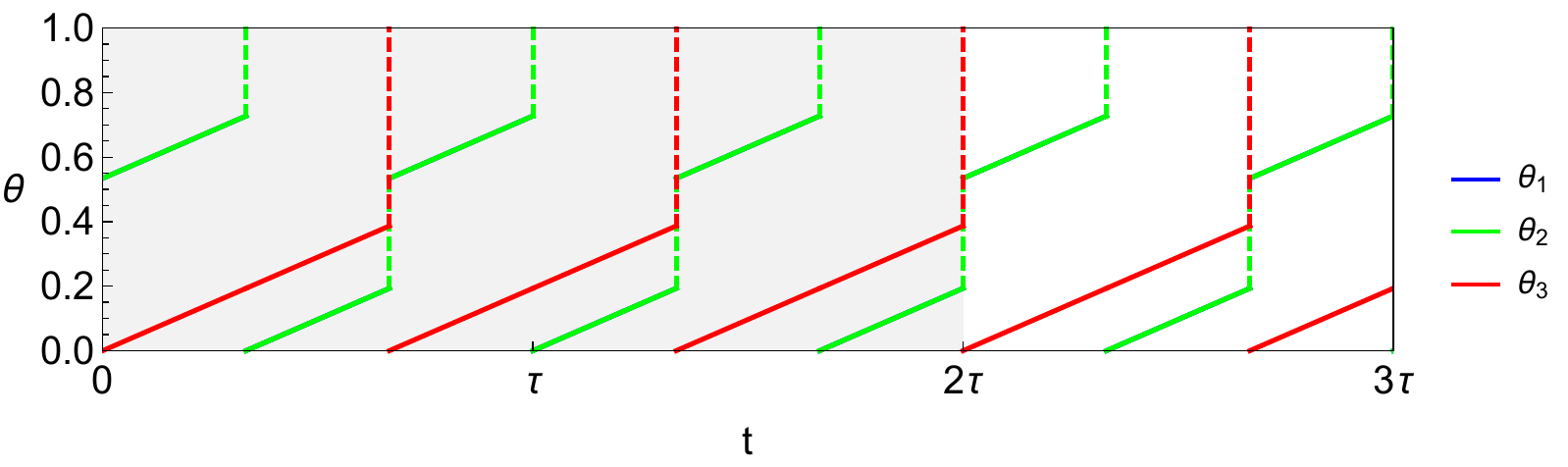}
  \caption{Phase evolution for the period $1$ point in IR3. Note that oscillators $1$ and $2$ are synchronized, that is, $\theta_1 = \theta_2$.}
  \label{fig/ir3-center-dynamics}
\end{figure}

\begin{figure}[tp]
  \centering
  \subfloat[Parameter region for IR3.\label{fig/ir3-exist}]%
  {\includegraphics[width=0.48\linewidth]{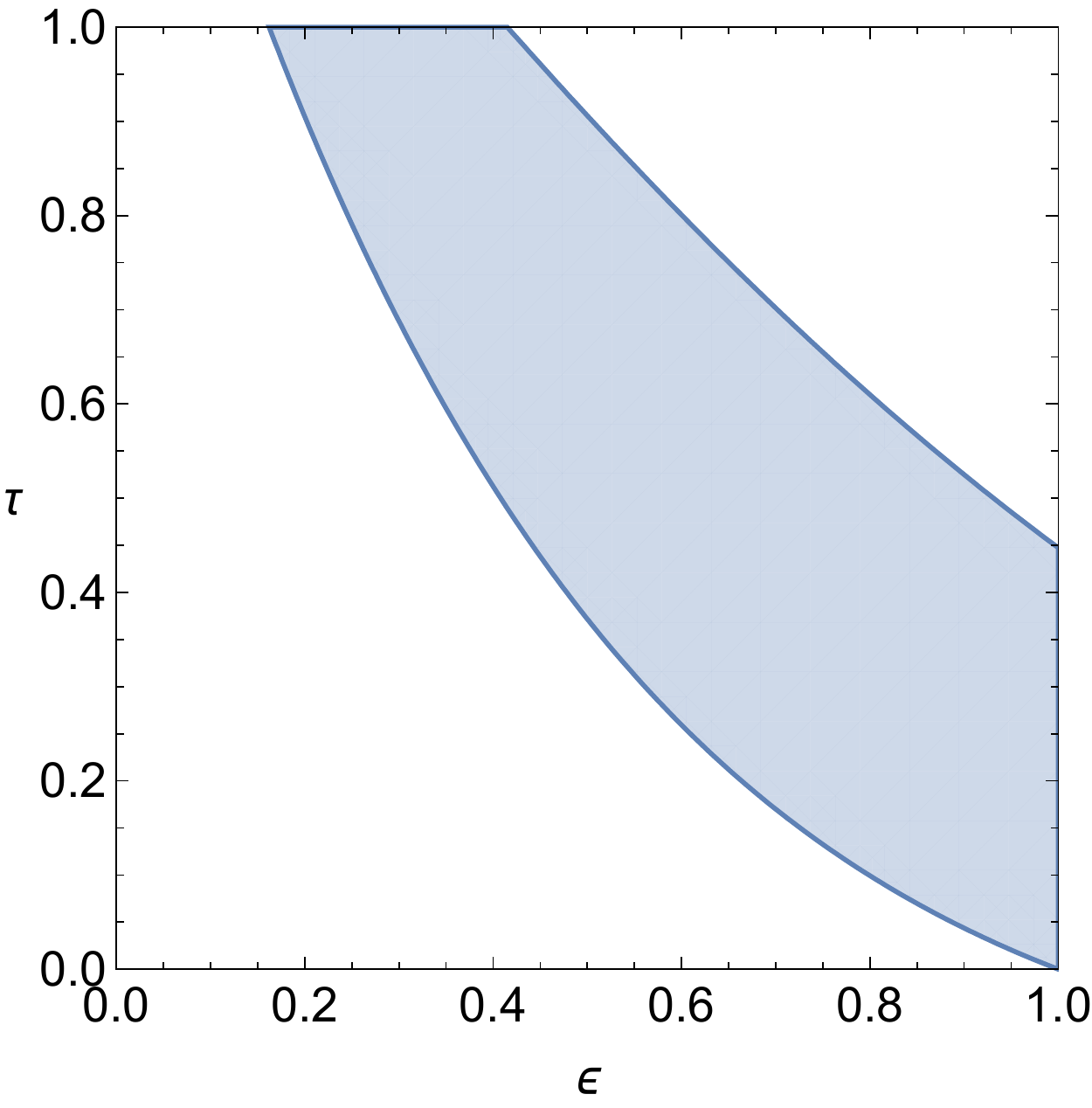}}
  \hfill
  \subfloat[Parameter region for IR5.\label{fig/ir5-exist}]%
  {\includegraphics[width=0.48\linewidth]{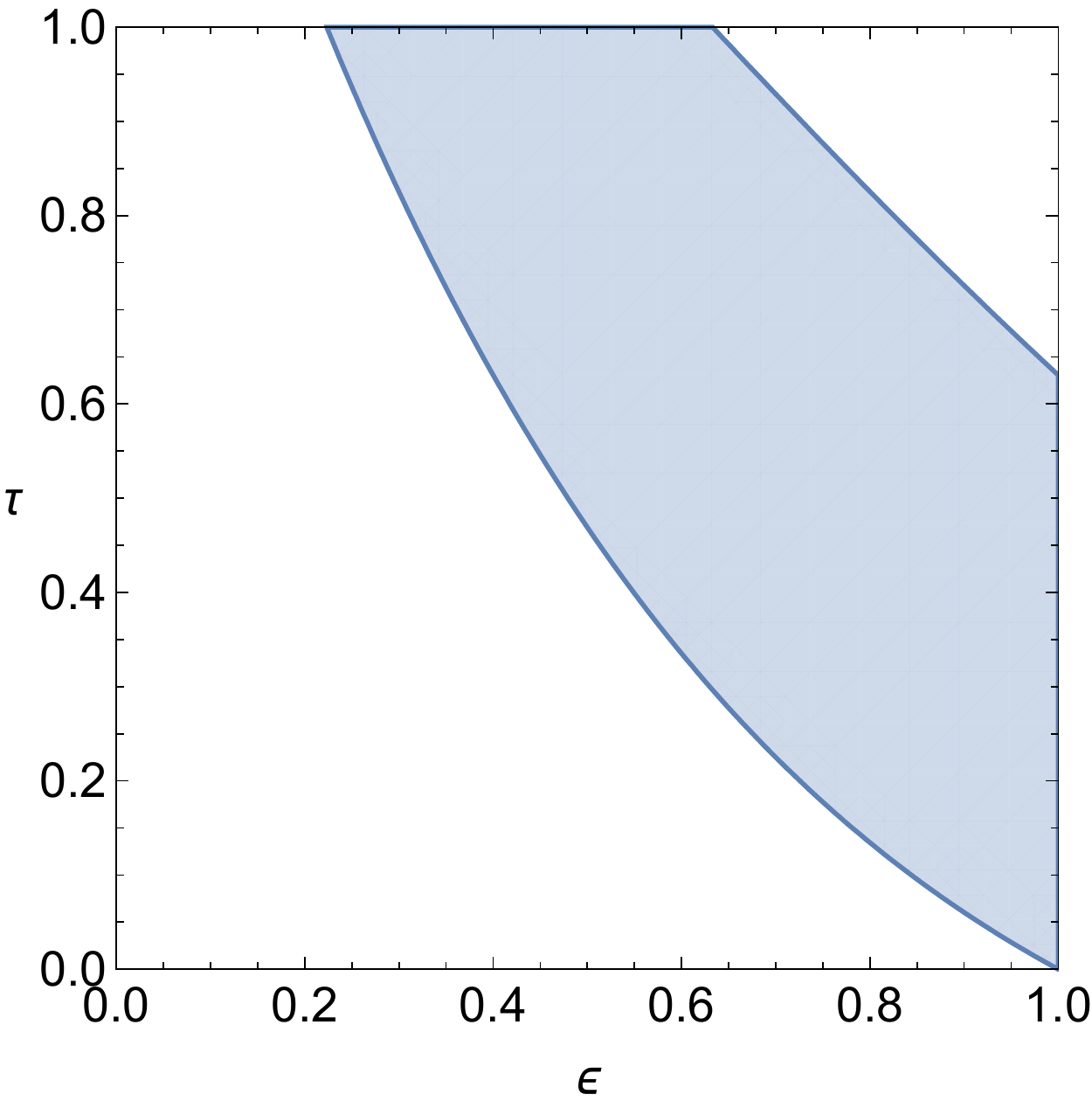}}
  \caption{Subsets of the parameter space $(\varepsilon,\tau)$ for which the network has isochronous regions IR3 and IR5.}
\end{figure}

Using similar arguments as in the analysis of IR4 we find that the point
\begin{align*}
  \bm{\sigma}_* = \left( \sigma_1, \sigma_3\right)
           = \left(\frac{\tau}{3}, \frac{2\tau}{3} \right),
\end{align*}
gives a periodic orbit with Poincar\'e period $T_P=1$. Its phase evolution is shown in \autoref{fig/ir3-center-dynamics}. Moreover, we find that this occurs for
\begin{align} \label{eq/master-ineq3}
  H_* \le H\left( \frac{\tau}{3} \right) + \frac{\tau}{3} \le 1,
\end{align}
thus giving the subset of the parameter space $(\varepsilon, \tau)$ for which IR3 exists, see \autoref{fig/ir3-exist}.

\subsection{The isochronous region IR5}

\begin{figure}[tp]
  \centering
  \includegraphics[width=\linewidth]{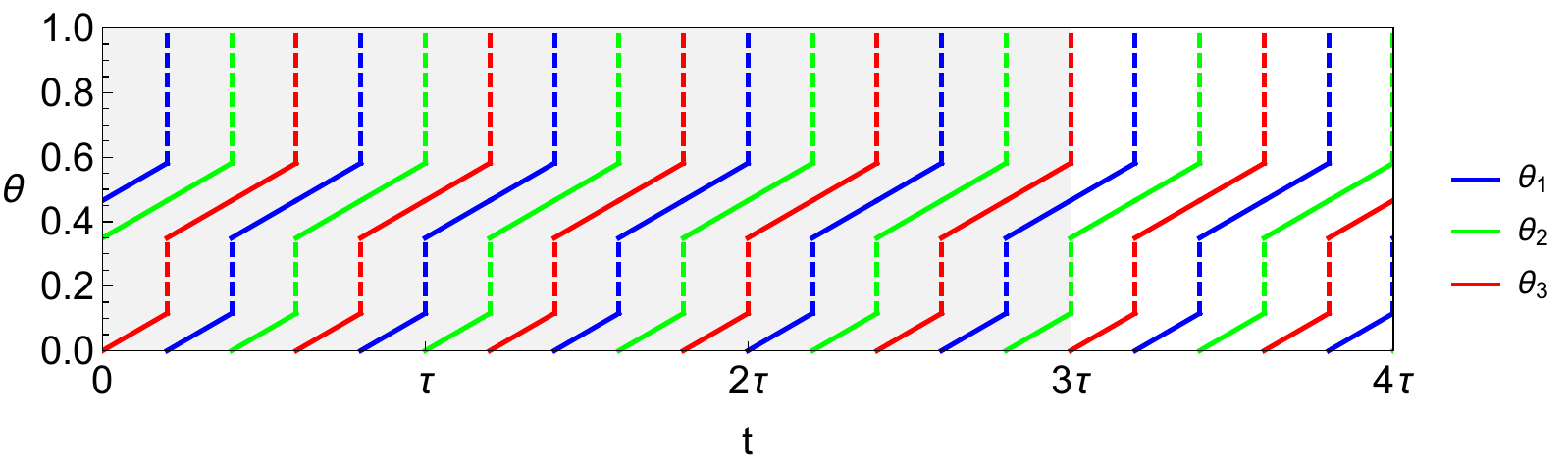}
  \caption{Phase dynamics for the period $1$ point in IR5.}
  \label{fig/ir5-center-dynamics}
\end{figure}

\begin{figure}[tp]
  \centering
  \subfloat[$\mathcal A_{\varepsilon,\tau}$.\label{fig/ir5-theta}]%
  {\includegraphics[width=0.48\linewidth]{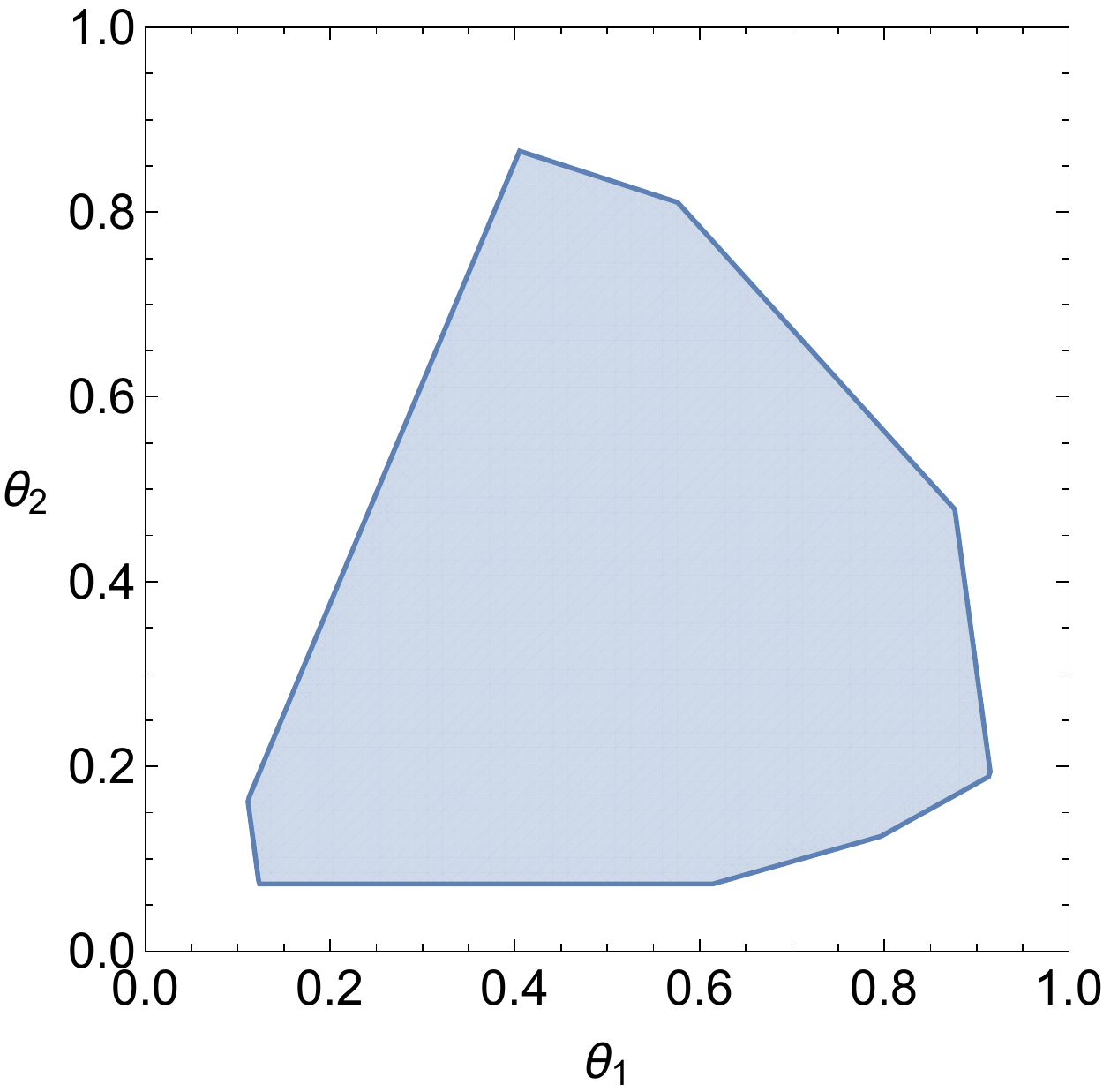}}
  \hfill
  \subfloat[Comparison with numerically computed period-$5$ orbits.\label{fig/ir5-comparison}]%
  {\includegraphics[width=0.48\linewidth]{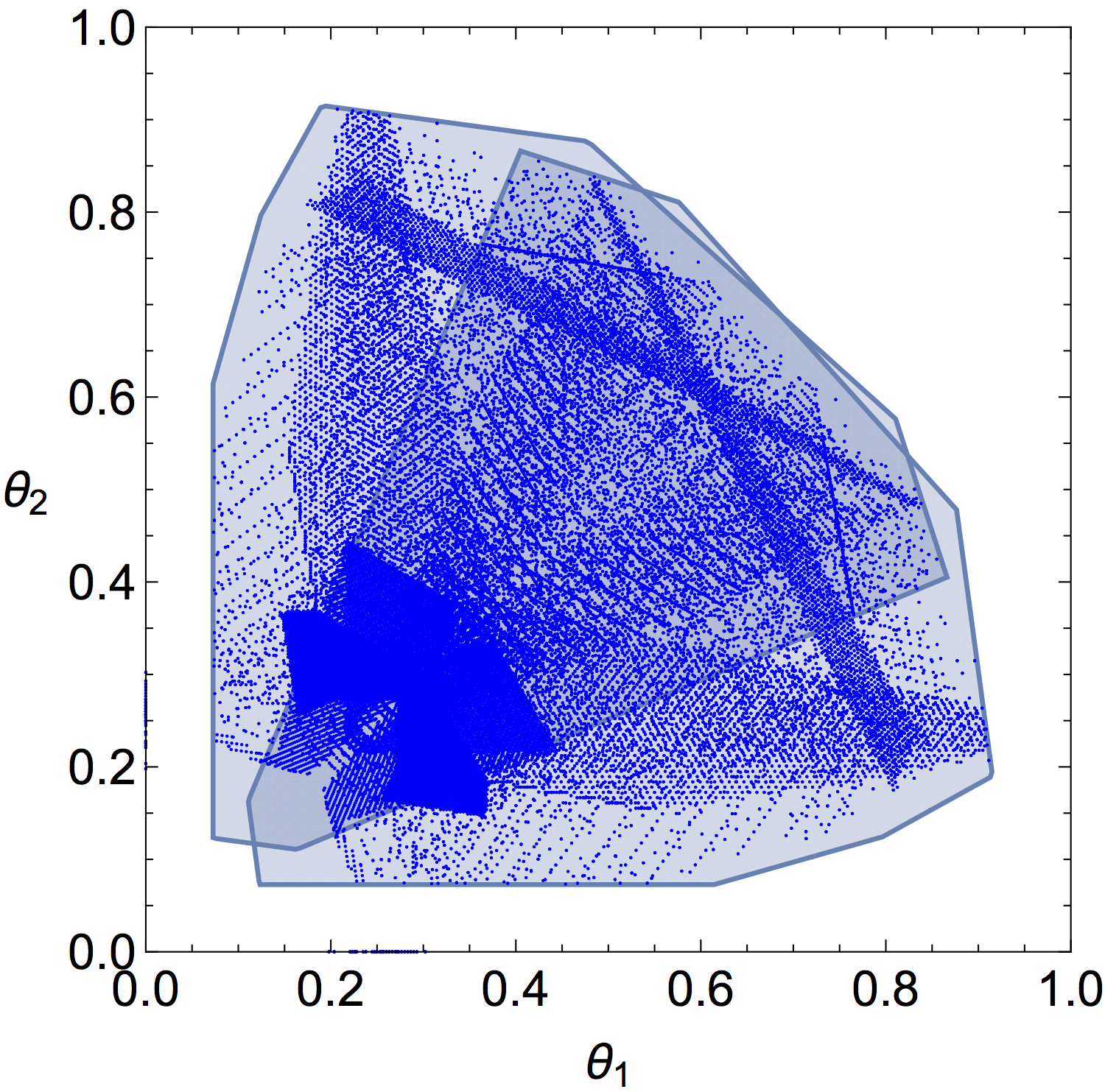}}
  \caption{The set $\mathcal A_{\varepsilon,\tau}$ for IR5 (the projection of $S(\Omega_{\varepsilon,\tau})$ onto the $(\theta_1,\theta_2)$-plane) for IR5 and comparison with the numerically computed period-$5$ orbits. In the comparison we have drawn $\mathcal A_{\varepsilon,\tau}$ and its reflection with respect to the diagonal corresponding to the interchange of oscillators $1$ and $2$.}
\end{figure}

For the isochronous region IR5, corresponding to periodic orbits with Poincar\'e period $T_P=5$ we consider the subset $\Omega_{\varepsilon,\tau}$ of the $(\sigma_1, \sigma_2^{(1)}, \sigma_2^{(2)}, \sigma_3)$-space defined by the relations
\begin{subequations}
  \begin{align}\label{eq/ineq-pulse-class-C}
    \begin{aligned}
      & 0 < \sigma_2^{(1)} < \sigma_1 < \sigma_3 <\sigma_2^{(2)} < \tau, \\
      & H_* \le F_k(\bm{\sigma}; \tau) \le 1, \ \text{$k=1,2,3,4,5$},
    \end{aligned}
  \end{align}
  where
  \begin{align}\label{eq/def-F-C}
    \begin{aligned}
      F_1(\bm{\sigma};\tau) &:= H(\sigma_2^{(1)})+\tau-\sigma_3, \\
      F_2(\bm{\sigma};\tau) &:= H(\tau-\sigma_2^{(2)})+\sigma_2^{(2)}-\sigma_1, \\
      F_3(\bm{\sigma};\tau) &:= H(\sigma_2^{(2)}-\sigma_3)+\sigma_3-\sigma_2^{(1)}, \\
      F_4(\bm{\sigma};\tau) &:= H(\sigma_3-\sigma_1)+\sigma_1, \\
      F_5(\bm{\sigma};\tau) &:= H(\sigma_1-\sigma_2^{(1)})+\sigma_2^{(1)}+\tau-\sigma_2^{(2)}. \\
    \end{aligned}
  \end{align}
\end{subequations}
Then the set of initial states comprising IR5 is $S(\Omega_{\varepsilon,\tau})$ where $S$ is given by
\begin{align*}
  S & : (\sigma_1, \sigma_2^{(1)},\sigma_2^{(2)}, \sigma_3) \\
    & \mapsto \left( \theta_1, \theta_2, \theta_3 ;\; \{\sigma_1\}, \{ \sigma_2^{(1)},\sigma_2^{(2)} \}, \{\sigma_3\} \right) \\
    & = \Big( H(\sigma_1-\sigma_2^{(1)})+\sigma_2^{(1)}, H(\sigma_2^{(1)}), 0 ;\; \\
    & \qquad
    \{\sigma_1\}, \{ \sigma_2^{(1)},\sigma_2^{(2)} \}, \{\sigma_3\} \Big).
\end{align*}
The projection of $S(\Omega_{\varepsilon,\tau})$ on the $(\theta_1,\theta_2)$-plane is shown in \autoref{fig/ir5-theta}.

Using similar arguments as in the analysis of IR4 we find that the point
\begin{align*}
  \bm{\sigma}_*
  = \left( \sigma_1, \sigma_2^{(1)}, \sigma_2^{(2)}, \sigma_3 \right)
  = \left( \frac{2\tau}{5}, \frac{\tau}{5}, \frac{4\tau}{5}, \frac{3\tau}{5} \right).
\end{align*}
gives a periodic orbit with Poincar\'e period $T_P=1$. Its phase evolution is shown in \autoref{fig/ir5-center-dynamics}. Moreover, we find that this occurs for
\begin{align} \label{eq/master-ineq5}
  H_* \le H\left( \frac{\tau}{5} \right) + \frac{2\tau}{5} \le 1,
\end{align}
thus giving the subset of the parameter space $(\varepsilon, \tau)$ for which IR5 exists, see \autoref{fig/ir5-exist}.

\section{Conclusions}
\label{sec/conclusions}

We have reported the existence of non-trivial isochronous dynamics in pulse coupled oscillator networks with delay. In particular, we have presented numerical evidence for the existence of such isochronous regions and we have proved their existence for a subset of the parameter space $(\varepsilon,\tau)$ with non-empty interior. Moreover, we have described in detail the dynamics and stability of orbits in one of the isochronours regions that we call IR4.

The appearance of isochronous regions in pulse coupled oscillator networks with delays demonstrates the capacity of such systems for generating non-trivial dynamics that one would not, in general, expect for smooth dynamical systems. Of particular interest here is that isochronous dynamics coexists with attracting isolated fixed points and periodic orbits. This may be of interest for applications using heteroclinic connections between saddle periodic orbits as representations of computational tasks \citep{Ashwin2004, Ashwin2005b, Schittler-Neves2012}.

Several questions regarding isochronous regions in pulse coupled oscillator networks with delay remain open. The main questions going forward is whether such dynamics exist for larger numbers of oscillators and whether such dynamics persists in networks with non-identical oscillators or different network structure.

\section*{Acknowledgements}
This work was completed when P.L., supported by the China Scholarship Council, worked as a visiting PhD student at the University of Groningen.  W.L. was supported by the NSFC (Grants no. 11322111 and no. 61273014). K.E. was supported by the NSFC (Grant no. 61502132) and the XJTLU Research Development Fund (no. 12-02-08).

\bibliography{library}

\begin{thebibliography}{23}%
\makeatletter
\providecommand \@ifxundefined [1]{%
 \@ifx{#1\undefined}
}%
\providecommand \@ifnum [1]{%
 \ifnum #1\expandafter \@firstoftwo
 \else \expandafter \@secondoftwo
 \fi
}%
\providecommand \@ifx [1]{%
 \ifx #1\expandafter \@firstoftwo
 \else \expandafter \@secondoftwo
 \fi
}%
\providecommand \natexlab [1]{#1}%
\providecommand \enquote  [1]{``#1''}%
\providecommand \bibnamefont  [1]{#1}%
\providecommand \bibfnamefont [1]{#1}%
\providecommand \citenamefont [1]{#1}%
\providecommand \href@noop [0]{\@secondoftwo}%
\providecommand \href [0]{\begingroup \@sanitize@url \@href}%
\providecommand \@href[1]{\@@startlink{#1}\@@href}%
\providecommand \@@href[1]{\endgroup#1\@@endlink}%
\providecommand \@sanitize@url [0]{\catcode `\\12\catcode `\$12\catcode
  `\&12\catcode `\#12\catcode `\^12\catcode `\_12\catcode `\%12\relax}%
\providecommand \@@startlink[1]{}%
\providecommand \@@endlink[0]{}%
\providecommand \url  [0]{\begingroup\@sanitize@url \@url }%
\providecommand \@url [1]{\endgroup\@href {#1}{\urlprefix }}%
\providecommand \urlprefix  [0]{URL }%
\providecommand \Eprint [0]{\href }%
\providecommand \doibase [0]{http://dx.doi.org/}%
\providecommand \selectlanguage [0]{\@gobble}%
\providecommand \bibinfo  [0]{\@secondoftwo}%
\providecommand \bibfield  [0]{\@secondoftwo}%
\providecommand \translation [1]{[#1]}%
\providecommand \BibitemOpen [0]{}%
\providecommand \bibitemStop [0]{}%
\providecommand \bibitemNoStop [0]{.\EOS\space}%
\providecommand \EOS [0]{\spacefactor3000\relax}%
\providecommand \BibitemShut  [1]{\csname bibitem#1\endcsname}%
\let\auto@bib@innerbib\@empty
\bibitem [{\citenamefont {Ashwin}\ and\ \citenamefont
  {Borresen}(2004)}]{Ashwin2004}%
  \BibitemOpen
  \bibfield  {author} {\bibinfo {author} {\bibnamefont {Ashwin}, \bibfnamefont
  {P.}}\ and\ \bibinfo {author} {\bibnamefont {Borresen}, \bibfnamefont {J.}},\
  }\bibfield  {title} {\enquote {\bibinfo {title} {Encoding via conjugate
  symmetries of slow oscillations for globally coupled oscillators},}\
  }\href@noop {} {\bibfield  {journal} {\bibinfo  {journal} {Phys. Rev. E}\
  }\textbf {\bibinfo {volume} {70}},\ \bibinfo {pages} {026203} (\bibinfo
  {year} {2004})}\BibitemShut {NoStop}%
\bibitem [{\citenamefont {Ashwin}\ and\ \citenamefont
  {Borresen}(2005)}]{Ashwin2005b}%
  \BibitemOpen
  \bibfield  {author} {\bibinfo {author} {\bibnamefont {Ashwin}, \bibfnamefont
  {P.}}\ and\ \bibinfo {author} {\bibnamefont {Borresen}, \bibfnamefont {J.}},\
  }\bibfield  {title} {\enquote {\bibinfo {title} {Discrete computation using a
  perturbed heteroclinic network},}\ }\href@noop {} {\bibfield  {journal}
  {\bibinfo  {journal} {Physics Letters A}\ }\textbf {\bibinfo {volume}
  {347}},\ \bibinfo {pages} {208--214} (\bibinfo {year} {2005})}\BibitemShut
  {NoStop}%
\bibitem [{\citenamefont {Ashwin}\ and\ \citenamefont
  {Timme}(2005)}]{Ashwin2005}%
  \BibitemOpen
  \bibfield  {author} {\bibinfo {author} {\bibnamefont {Ashwin}, \bibfnamefont
  {P.}}\ and\ \bibinfo {author} {\bibnamefont {Timme}, \bibfnamefont {M.}},\
  }\bibfield  {title} {\enquote {\bibinfo {title} {Unstable attractors:
  existence and robustness in networks of oscillators with delayed pulse
  coupling},}\ }\href {\doibase 10.1088/0951-7715/18/5/009} {\bibfield
  {journal} {\bibinfo  {journal} {Nonlinearity}\ }\textbf {\bibinfo {volume}
  {18}},\ \bibinfo {pages} {2035--2060} (\bibinfo {year} {2005})}\BibitemShut
  {NoStop}%
\bibitem [{\citenamefont {Broer}, \citenamefont {Efstathiou},\ and\
  \citenamefont {Subramanian}(2008{\natexlab{a}})}]{Broer2008}%
  \BibitemOpen
  \bibfield  {author} {\bibinfo {author} {\bibnamefont {Broer}, \bibfnamefont
  {H.~W.}}, \bibinfo {author} {\bibnamefont {Efstathiou}, \bibfnamefont {K.}},
  \ and\ \bibinfo {author} {\bibnamefont {Subramanian}, \bibfnamefont {E.}},\
  }\bibfield  {title} {\enquote {\bibinfo {title} {Heteroclinic cycles between
  unstable attractors},}\ }\href@noop {} {\bibfield  {journal} {\bibinfo
  {journal} {Nonlinearity}\ }\textbf {\bibinfo {volume} {21}},\ \bibinfo
  {pages} {1385--1410} (\bibinfo {year} {2008}{\natexlab{a}})}\BibitemShut
  {NoStop}%
\bibitem [{\citenamefont {Broer}, \citenamefont {Efstathiou},\ and\
  \citenamefont {Subramanian}(2008{\natexlab{b}})}]{Broer2008a}%
  \BibitemOpen
  \bibfield  {author} {\bibinfo {author} {\bibnamefont {Broer}, \bibfnamefont
  {H.~W.}}, \bibinfo {author} {\bibnamefont {Efstathiou}, \bibfnamefont {K.}},
  \ and\ \bibinfo {author} {\bibnamefont {Subramanian}, \bibfnamefont {E.}},\
  }\bibfield  {title} {\enquote {\bibinfo {title} {Robustness of unstable
  attractors in arbitrarily sized pulse-coupled networks with delay},}\ }\href
  {\doibase 10.1088/0951-7715/21/1/002} {\bibfield  {journal} {\bibinfo
  {journal} {Nonlinearity}\ }\textbf {\bibinfo {volume} {21}},\ \bibinfo
  {pages} {13--49} (\bibinfo {year} {2008}{\natexlab{b}})}\BibitemShut
  {NoStop}%
\bibitem [{\citenamefont {Calogero}(2011)}]{Calogero2011}%
  \BibitemOpen
  \bibfield  {author} {\bibinfo {author} {\bibnamefont {Calogero},
  \bibfnamefont {F.}},\ }\bibfield  {title} {\enquote {\bibinfo {title}
  {Isochronous dynamical systems},}\ }\href@noop {} {\bibfield  {journal}
  {\bibinfo  {journal} {Philosophical Transactions of the Royal Society of
  London A: Mathematical, Physical and Engineering Sciences}\ }\textbf
  {\bibinfo {volume} {369}},\ \bibinfo {pages} {1118--1136} (\bibinfo {year}
  {2011})}\BibitemShut {NoStop}%
\bibitem [{\citenamefont {Ernst}, \citenamefont {Pawelzik},\ and\ \citenamefont
  {Geisel}(1995)}]{Ernst1995}%
  \BibitemOpen
  \bibfield  {author} {\bibinfo {author} {\bibnamefont {Ernst}, \bibfnamefont
  {U.}}, \bibinfo {author} {\bibnamefont {Pawelzik}, \bibfnamefont {K.}}, \
  and\ \bibinfo {author} {\bibnamefont {Geisel}, \bibfnamefont {T.}},\
  }\bibfield  {title} {\enquote {\bibinfo {title} {Synchronization induced by
  temporal delays in pulse-coupled oscillators},}\ }\href {\doibase
  10.1103/PhysRevLett.74.1570} {\bibfield  {journal} {\bibinfo  {journal}
  {Physical Review Letters}\ }\textbf {\bibinfo {volume} {74}},\ \bibinfo
  {pages} {1570--1573} (\bibinfo {year} {1995})}\BibitemShut {NoStop}%
\bibitem [{\citenamefont {Ernst}, \citenamefont {Pawelzik},\ and\ \citenamefont
  {Geisel}(1998)}]{Ernst1998}%
  \BibitemOpen
  \bibfield  {author} {\bibinfo {author} {\bibnamefont {Ernst}, \bibfnamefont
  {U.}}, \bibinfo {author} {\bibnamefont {Pawelzik}, \bibfnamefont {K.}}, \
  and\ \bibinfo {author} {\bibnamefont {Geisel}, \bibfnamefont {T.}},\
  }\bibfield  {title} {\enquote {\bibinfo {title} {Delay-induced multistable
  synchronization of biological oscillators},}\ }\href@noop {} {\bibfield
  {journal} {\bibinfo  {journal} {Phys. Rev. E}\ }\textbf {\bibinfo {volume}
  {57}},\ \bibinfo {pages} {2150--2162} (\bibinfo {year} {1998})}\BibitemShut
  {NoStop}%
\bibitem [{\citenamefont {Kielblock}, \citenamefont {Kirst},\ and\
  \citenamefont {Timme}(2011)}]{Kielblock2011}%
  \BibitemOpen
  \bibfield  {author} {\bibinfo {author} {\bibnamefont {Kielblock},
  \bibfnamefont {H.}}, \bibinfo {author} {\bibnamefont {Kirst}, \bibfnamefont
  {C.}}, \ and\ \bibinfo {author} {\bibnamefont {Timme}, \bibfnamefont {M.}},\
  }\bibfield  {title} {\enquote {\bibinfo {title} {Breakdown of order
  preservation in symmetric oscillator networks with pulse-coupling},}\
  }\href@noop {} {\bibfield  {journal} {\bibinfo  {journal} {Chaos: An
  Interdisciplinary Journal of Nonlinear Science}\ }\textbf {\bibinfo {volume}
  {21}},\ \bibinfo {pages} {025113} (\bibinfo {year} {2011})}\BibitemShut
  {NoStop}%
\bibitem [{\citenamefont {Klinglmayr}\ and\ \citenamefont
  {Bettstetter}(2012)}]{Klinglmayr2012a}%
  \BibitemOpen
  \bibfield  {author} {\bibinfo {author} {\bibnamefont {Klinglmayr},
  \bibfnamefont {J.}}\ and\ \bibinfo {author} {\bibnamefont {Bettstetter},
  \bibfnamefont {C.}},\ }\bibfield  {title} {\enquote {\bibinfo {title}
  {Self-organizing synchronization with inhibitory-coupled oscillators:
  Convergence and robustness},}\ }\href@noop {} {\bibfield  {journal} {\bibinfo
   {journal} {ACM Transactions on Autonomous and Adaptive Systems (TAAS)}\
  }\textbf {\bibinfo {volume} {7}},\ \bibinfo {pages} {30} (\bibinfo {year}
  {2012})}\BibitemShut {NoStop}%
\bibitem [{\citenamefont {LaMar}\ and\ \citenamefont
  {Smith}(2010)}]{LaMar2010}%
  \BibitemOpen
  \bibfield  {author} {\bibinfo {author} {\bibnamefont {LaMar}, \bibfnamefont
  {M.~D.}}\ and\ \bibinfo {author} {\bibnamefont {Smith}, \bibfnamefont
  {G.~D.}},\ }\bibfield  {title} {\enquote {\bibinfo {title} {Effect of
  node-degree correlation on synchronization of identical pulse-coupled
  oscillators},}\ }\href@noop {} {\bibfield  {journal} {\bibinfo  {journal}
  {Physical Review E}\ }\textbf {\bibinfo {volume} {81}},\ \bibinfo {pages}
  {046206} (\bibinfo {year} {2010})}\BibitemShut {NoStop}%
\bibitem [{\citenamefont {Mirollo}\ and\ \citenamefont
  {Strogatz}(1990)}]{Mirollo1990}%
  \BibitemOpen
  \bibfield  {author} {\bibinfo {author} {\bibnamefont {Mirollo}, \bibfnamefont
  {R.~E.}}\ and\ \bibinfo {author} {\bibnamefont {Strogatz}, \bibfnamefont
  {S.~H.}},\ }\bibfield  {title} {\enquote {\bibinfo {title} {Synchronization
  of pulse-coupled biological oscillators},}\ }\href@noop {} {\bibfield
  {journal} {\bibinfo  {journal} {SIAM Journal on Applied Mathematics}\
  }\textbf {\bibinfo {volume} {50}},\ \bibinfo {pages} {1645--1662} (\bibinfo
  {year} {1990})}\BibitemShut {NoStop}%
\bibitem [{\citenamefont {O'Keeffe}, \citenamefont {Krapivsky},\ and\
  \citenamefont {Strogatz}(2015)}]{OKeeffe2015}%
  \BibitemOpen
  \bibfield  {author} {\bibinfo {author} {\bibnamefont {O'Keeffe},
  \bibfnamefont {K.~P.}}, \bibinfo {author} {\bibnamefont {Krapivsky},
  \bibfnamefont {P.~L.}}, \ and\ \bibinfo {author} {\bibnamefont {Strogatz},
  \bibfnamefont {S.~H.}},\ }\bibfield  {title} {\enquote {\bibinfo {title}
  {Synchronization as aggregation: Cluster kinetics of pulse-coupled
  oscillators},}\ }\href@noop {} {\bibfield  {journal} {\bibinfo  {journal}
  {Physical review letters}\ }\textbf {\bibinfo {volume} {115}},\ \bibinfo
  {pages} {064101} (\bibinfo {year} {2015})}\BibitemShut {NoStop}%
\bibitem [{\citenamefont {Peskin}(1975)}]{Peskin1975}%
  \BibitemOpen
  \bibfield  {author} {\bibinfo {author} {\bibnamefont {Peskin}, \bibfnamefont
  {C.~S.}},\ }\href@noop {} {\emph {\bibinfo {title} {Mathematical aspects of
  heart physiology}}},\ Courant Institute Lecture Notes\ (\bibinfo  {publisher}
  {Courant Institute of Mathematical Sciences},\ \bibinfo {year}
  {1975})\BibitemShut {NoStop}%
\bibitem [{\citenamefont {Schittler~Neves}\ and\ \citenamefont
  {Timme}(2012)}]{Schittler-Neves2012}%
  \BibitemOpen
  \bibfield  {author} {\bibinfo {author} {\bibnamefont {Schittler~Neves},
  \bibfnamefont {F.}}\ and\ \bibinfo {author} {\bibnamefont {Timme},
  \bibfnamefont {M.}},\ }\bibfield  {title} {\enquote {\bibinfo {title}
  {Computation by switching in complex networks of states},}\ }\href@noop {}
  {\bibfield  {journal} {\bibinfo  {journal} {Physical Review Letters}\
  }\textbf {\bibinfo {volume} {109}},\ \bibinfo {pages} {018701} (\bibinfo
  {year} {2012})}\BibitemShut {NoStop}%
\bibitem [{\citenamefont {Timme}(2002)}]{Timme2002a}%
  \BibitemOpen
  \bibfield  {author} {\bibinfo {author} {\bibnamefont {Timme}, \bibfnamefont
  {M.}},\ }\emph {\bibinfo {title} {Collective Dynamics in networks of
  pulse-coupled oscillators}},\ \href@noop {} {Ph.D. thesis},\ \bibinfo
  {school} {University of G\"{o}ttingen} (\bibinfo {year} {2002})\BibitemShut
  {NoStop}%
\bibitem [{\citenamefont {Timme}\ and\ \citenamefont {Wolf}(2008)}]{Timme2008}%
  \BibitemOpen
  \bibfield  {author} {\bibinfo {author} {\bibnamefont {Timme}, \bibfnamefont
  {M.}}\ and\ \bibinfo {author} {\bibnamefont {Wolf}, \bibfnamefont {F.}},\
  }\bibfield  {title} {\enquote {\bibinfo {title} {The simplest problem in the
  collective dynamics of neural networks: is synchrony stable?}}\ }\href@noop
  {} {\bibfield  {journal} {\bibinfo  {journal} {Nonlinearity}\ }\textbf
  {\bibinfo {volume} {21}},\ \bibinfo {pages} {1579} (\bibinfo {year}
  {2008})}\BibitemShut {NoStop}%
\bibitem [{\citenamefont {Timme}, \citenamefont {Wolf},\ and\ \citenamefont
  {Geisel}(2003)}]{Timme2003}%
  \BibitemOpen
  \bibfield  {author} {\bibinfo {author} {\bibnamefont {Timme}, \bibfnamefont
  {M.}}, \bibinfo {author} {\bibnamefont {Wolf}, \bibfnamefont {F.}}, \ and\
  \bibinfo {author} {\bibnamefont {Geisel}, \bibfnamefont {T.}},\ }\bibfield
  {title} {\enquote {\bibinfo {title} {Unstable attractors induce perpetual
  synchronization and desynchronization},}\ }\href {\doibase 10.1063/1.1501274}
  {\bibfield  {journal} {\bibinfo  {journal} {Chaos}\ }\textbf {\bibinfo
  {volume} {13}},\ \bibinfo {pages} {377} (\bibinfo {year} {2003})}\BibitemShut
  {NoStop}%
\bibitem [{\citenamefont {Wu}\ and\ \citenamefont {Chen}(2007)}]{Wu2007}%
  \BibitemOpen
  \bibfield  {author} {\bibinfo {author} {\bibnamefont {Wu}, \bibfnamefont
  {W.}}\ and\ \bibinfo {author} {\bibnamefont {Chen}, \bibfnamefont {T.}},\
  }\bibfield  {title} {\enquote {\bibinfo {title} {Desynchronization of
  pulse-coupled oscillators with delayed excitatory coupling},}\ }\href
  {\doibase 10.1088/0951-7715/20/3/011} {\bibfield  {journal} {\bibinfo
  {journal} {Nonlinearity}\ }\textbf {\bibinfo {volume} {20}},\ \bibinfo
  {pages} {789--808} (\bibinfo {year} {2007})}\BibitemShut {NoStop}%
\bibitem [{\citenamefont {Wu}\ and\ \citenamefont {Chen}(2009)}]{Wu2009}%
  \BibitemOpen
  \bibfield  {author} {\bibinfo {author} {\bibnamefont {Wu}, \bibfnamefont
  {W.}}\ and\ \bibinfo {author} {\bibnamefont {Chen}, \bibfnamefont {T.}},\
  }\bibfield  {title} {\enquote {\bibinfo {title} {Impossibility of asymptotic
  synchronization for pulse-coupled oscillators with delayed excitatory
  coupling},}\ }\href@noop {} {\bibfield  {journal} {\bibinfo  {journal}
  {International journal of neural systems}\ }\textbf {\bibinfo {volume}
  {19}},\ \bibinfo {pages} {425--435} (\bibinfo {year} {2009})}\BibitemShut
  {NoStop}%
\bibitem [{\citenamefont {Wu}, \citenamefont {Liu},\ and\ \citenamefont
  {Chen}(2010)}]{Wu2010a}%
  \BibitemOpen
  \bibfield  {author} {\bibinfo {author} {\bibnamefont {Wu}, \bibfnamefont
  {W.}}, \bibinfo {author} {\bibnamefont {Liu}, \bibfnamefont {B.}}, \ and\
  \bibinfo {author} {\bibnamefont {Chen}, \bibfnamefont {T.}},\ }\bibfield
  {title} {\enquote {\bibinfo {title} {Analysis of firing behaviors in networks
  of pulse-coupled oscillators with delayed excitatory coupling},}\ }\href@noop
  {} {\bibfield  {journal} {\bibinfo  {journal} {Neural Networks}\ }\textbf
  {\bibinfo {volume} {23}},\ \bibinfo {pages} {783--788} (\bibinfo {year}
  {2010})}\BibitemShut {NoStop}%
\bibitem [{\citenamefont {Zeitler}, \citenamefont {Daffertshofer},\ and\
  \citenamefont {Gielen}(2009)}]{Zeitler2009}%
  \BibitemOpen
  \bibfield  {author} {\bibinfo {author} {\bibnamefont {Zeitler}, \bibfnamefont
  {M.}}, \bibinfo {author} {\bibnamefont {Daffertshofer}, \bibfnamefont {A.}},
  \ and\ \bibinfo {author} {\bibnamefont {Gielen}, \bibfnamefont {C.}},\
  }\bibfield  {title} {\enquote {\bibinfo {title} {Asymmetry in pulse-coupled
  oscillators with delay},}\ }\href@noop {} {\bibfield  {journal} {\bibinfo
  {journal} {Physical Review E}\ }\textbf {\bibinfo {volume} {79}},\ \bibinfo
  {pages} {065203} (\bibinfo {year} {2009})}\BibitemShut {NoStop}%
\bibitem [{\citenamefont {Zumdieck}\ \emph {et~al.}(2004)\citenamefont
  {Zumdieck}, \citenamefont {Timme}, \citenamefont {Geisel},\ and\
  \citenamefont {Wolf}}]{Zumdieck2004}%
  \BibitemOpen
  \bibfield  {author} {\bibinfo {author} {\bibnamefont {Zumdieck},
  \bibfnamefont {A.}}, \bibinfo {author} {\bibnamefont {Timme}, \bibfnamefont
  {M.}}, \bibinfo {author} {\bibnamefont {Geisel}, \bibfnamefont {T.}}, \ and\
  \bibinfo {author} {\bibnamefont {Wolf}, \bibfnamefont {F.}},\ }\bibfield
  {title} {\enquote {\bibinfo {title} {Long chaotic transients in complex
  networks},}\ }\href@noop {} {\bibfield  {journal} {\bibinfo  {journal}
  {Physical Review Letters}\ }\textbf {\bibinfo {volume} {93}},\ \bibinfo
  {pages} {244103} (\bibinfo {year} {2004})}\BibitemShut {NoStop}%
\end{thebibliography}%

\end{document}